\DeclareMathOperator{\Z}{\mathbb{Z}}
\DeclareMathOperator{\Q}{\mathbb{Q}}
\DeclareMathOperator{\Aut}{Aut}
\DeclareMathOperator{\Gal}{Gal}
\DeclareMathOperator{\Irr}{Irr}
\DeclareMathOperator{\cl}{cl}
\newtheorem{theorem}{Theorem} 
\newtheorem{lemma}[theorem]{Lemma}
\newtheorem{corollary}{Corollary}[theorem]
\newtheorem{question}{Question}
\theoremstyle{remark}
\theoremstyle{definition}
\newtheorem*{definition}{Definition}
\theoremstyle{plain}
\newtheorem{Theorem}{Theorem}
\newtheorem*{conA'}{Conjecture A'}
\theoremstyle{definition}
\numberwithin{equation}{section}
\begin{document}

\title[On groups with at most five irrational conjugacy classes]{On groups with at most five irrational conjugacy classes}


\author{Gabriel A. L. Souza}
\address{Departament de Matemàtiques, Universitat de València, 46100 Burjassot, València, Spain}
\email{gabriel.area@uv.es}


\thanks{The author is supported by Ministerio de Ciencia e Innovación (grant PREP2022-000021 tied to the project PID2022-137612NB-I00 funded by MCIN/AEI/10.13039/501100011033 and ``ERDF A way of making Europe'').}

\keywords{Conjugacy classes, rational characters, rational elements, field of values}

\subjclass[2020]{Primary 20C15, Secondary 20E45}

\date{\today}


\begin{abstract}
	Much work has been done to study groups with few rational conjugacy classes or few rational irreducible characters. In this paper we look at the opposite extreme. Let $G$ be a finite group. Given a conjugacy class $K$ of $G$, we say it is \textit{irrational} if there is some $\chi \in \Irr(G)$ such that $\chi(K) \not \in \Q$. One of our main results shows that, when $G$ contains at most $5$ irrational conjugacy classes, then $|\Irr_{\Q}(G)| = |\cl_{\Q}(G)|$.  This suggests some duality with the known results and open questions on groups with few rational irreducible characters. Our results are independent of the Classification of Finite Simple Groups.
\end{abstract}

\maketitle



	\section{Introduction}

Let $x$ be an element of a finite group $G$ and let $\chi$ be one of its irreducible complex characters (our notation for characters will follow that of \cite{Isaacs:CTFG}). We denote by $\Q(x)$ and $\Q(\chi)$ the field extensions of $\Q$ generated by the sets $\{\psi(x) \mid \psi \in \Irr(G)\}$ and $\{\chi(g) \mid g \in G\}$, respectively. The element $x$ is called \textbf{rational} if $\Q(x) = \Q$; otherwise, it is called \textbf{irrational}. As characters are class functions, these notions naturally give rise to those of rational and irrational conjugacy classes; the set of all rational classes is denoted $\cl_{\Q}(G)$. Likewise, the character $\chi$ is called \textbf{rational} if $\Q(\chi) = \Q$, and the set of rational irreducible characters is denoted $\Irr_{\Q}(G)$; otherwise, $\chi$ is called \textbf{irrational}.

Much work has been done to see how the two notions relate to one another. For example, writing $k(G)$ for the number of conjugacy classes of the group $G$, it is trivial that $|\cl_{\Q}(G)| = k(G)$ if and only if $|\Irr_{\Q}(G)| = k(G)$, in which case the group is said to be a \textbf{rational group}, or \textbf{$\Q$-group}. The result in the other extreme, that $|\cl_{\Q}(G)| = 1$ if and only if $|\Irr_{\Q}(G)| = 1$, however, is much less straightforward. In fact, its proof, in \cite{NavTiep:Rational}, uses the Classification of Finite Simple Groups.

In \cite{NavTiep:Rational}, the authors also prove that $|\cl_{\Q}(G)| = 2$ if and only if $|\Irr_{\Q}(G)| = 2$. They have further conjectured that the same is true if one replaces $2$ by $3$ in the previous statement. One direction of this Navarro-Tiep conjecture has been proven in \cite{Rossi:Rational}, but the other remains open as far as we are aware. 

In our work, we tackle the situation in which $G$ has few irrational characters or classes. More concretely, we have the following:

\begin{Theorem}
	\label{TheoremA}
	Let $G$ be a finite group having at most $5$ irrational conjugacy classes. Then, $|\cl_{\Q}(G)| = |\Irr_{\Q}(G)|$.
\end{Theorem}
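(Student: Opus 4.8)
The plan is to rephrase the conclusion as an equality of counts of \emph{irrational} objects. Writing $k(G)$ for the number of conjugacy classes, we have $|\cl_{\Q}(G)| = k(G) - a$ and $|\Irr_{\Q}(G)| = k(G) - b$, where $a$ is the number of irrational classes and $b$ the number of irrational characters; so the statement is equivalent to $a = b$, and the hypothesis reads $a \le 5$. Let $\zeta$ be a primitive $|G|$th root of unity and set $A = \Gal(\Q(\zeta)/\Q)$, an abelian group acting on $\Irr(G)$ and on $\cl(G)$, the rational characters and rational classes being exactly the $A$-fixed points of the two actions. Brauer's permutation lemma, in the strong form coming from the relation $P_\sigma X = X Q_\sigma$ for the permutation matrices $P_\sigma, Q_\sigma$ induced by $\sigma \in A$ on the rows and columns of the (invertible) character table $X$, shows that the two permutation representations of $A$ are conjugate via the single matrix $X$. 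Consequently, for \emph{every} subgroup $H \le A$ the number of $H$-orbits on $\Irr(G)$ equals the number of $H$-orbits on $\cl(G)$; in particular each $\sigma \in A$ has the same cycle type on both sides (similar permutation matrices have equal cycle type), and the kernels of the two actions coincide, say $K$.

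First I would restrict the possibilities for $\bar A := A/K$, which acts \emph{faithfully} on both $\Irr(G)$ and $\cl(G)$ and trivially on the rational objects. Since $\bar A$ is abelian and acts transitively on each of its orbits among the $a \le 5$ irrational classes, the induced action on each orbit is regular, so $\bar A$ embeds into, and surjects onto each factor of, a product of abelian groups whose orders are the orbit sizes. Running through the partitions of $a$ into parts $\ge 2$, one finds $\bar A \in \{1, C_2, C_3, C_4, C_5, C_6, V_4\}$, where $V_4 = C_2 \times C_2$ is the only non-cyclic possibility and occurs only when $a = 4$ (one regular orbit of size $4$, or two orbits of size $2$).

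Next I would dispose of every cyclic case simultaneously. If $\bar A = \langle \bar\sigma \rangle$ is cyclic, then a class (resp.\ character) is $A$-fixed if and only if it is fixed by $\bar\sigma$, i.e.\ by any lift $\sigma \in A$; hence $|\cl_{\Q}(G)| = |\mathrm{Fix}_{\cl(G)}(\sigma)|$ and $|\Irr_{\Q}(G)| = |\mathrm{Fix}_{\Irr(G)}(\sigma)|$. These two fixed-point numbers agree because $\sigma$ has the same cycle type on both sides, giving $a = b$. (The case $\bar A = 1$ is just the trivial rational-group equivalence recalled in the introduction.)

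The hard part will be the single remaining case $\bar A \cong V_4$, and this is precisely where the permutation-lemma machinery is insufficient. A direct bookkeeping of fixed points and cycle types for the three involutions of $V_4$ — even combined with the equality of $H$-orbit counts for \emph{all} subgroups $H$ — does not force $b = 4$: the spurious profile in which the four irrational classes form one regular (biquadratic) $V_4$-orbit while the irrational characters form three size-$2$ orbits with the three distinct point-stabilizers (so $b = 6$, yielding three quadratic fields of values) satisfies all of these combinatorial constraints, and even the matching ``number of real classes $=$ number of real characters'' coming from complex conjugation. To eliminate it I would pass to fields of values: translate the $V_4$-action into the assertion that the relevant classes generate a biquadratic field $\Q(\sqrt{d_1},\sqrt{d_2})$ while the competing characters would generate its three quadratic subfields, and rule this out by induction on $|G|$ together with a targeted analysis of characters and classes whose field of values is a fixed quadratic (resp.\ biquadratic) field — separating the contribution of complex conjugation from that of the genuinely real quadratic irrationalities, and exploiting the strong structural restrictions that having only four irrational classes imposes on $G$. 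Establishing this quadratic refinement, rather than the Galois-combinatorial setup, is where the real work lies.
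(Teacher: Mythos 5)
Your reduction is sound and in fact matches the paper's: the cyclic cases (which cover $a\in\{2,3,5\}$ and most of $a=4$) follow from Brauer's permutation lemma exactly as you say, and you have correctly isolated the genuinely hard configuration --- the Klein four-group acting regularly on the four irrational classes --- together with the precise combinatorial obstruction, namely the profile of one regular orbit of $4$ classes against three stabilizer-distinct orbits totalling $6$ characters, which passes every cycle-type, fixed-point and orbit-count test available from the permutation lemma. (That profile is not hypothetical: its mirror image is realized by \texttt{SmallGroup(32,42)}, which has $4$ irrational characters and $6$ irrational classes, so no purely combinatorial argument can close the case.)

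However, the proposal stops exactly where the content of the theorem begins. ``Pass to fields of values and rule this out by induction on $|G|$ together with a targeted analysis'' is a statement of intent, not an argument: no inductive step is formulated, no passage to a proper subgroup or quotient is exhibited, and it is unclear what the induction would run on --- the paper's proof uses no induction at all. What actually eliminates the spurious profile is a direct character-table computation. Two applications of the orthogonality relations (one down the column of a moved class $x$, one along the row of a moved character) each express $|C_G(x)|$ as a sum of two squared differences of character values; equating the two expressions, under the assumption that some irrational character is fixed by one of the three involutions of $V_4$, yields identities of the form $|\chi_1(x)-\sigma\chi_1(x)| = |\chi_2(x)-\sigma\chi_2(x)| = |\chi_3(x)-\tau\chi_3(x)|$. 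Since each such difference equals $\pm\sqrt{\Delta}$ for a nonzero rational discriminant $\Delta$ (because $\chi(x)$ and $\sigma\chi(x)$ are the two roots of a quadratic with rational coefficients), the pigeonhole principle applied to the signs of the three discriminants forces two of the square roots to coincide, which makes one of the supposedly irrational character values Galois-fixed --- a contradiction. Hence no involution fixes $\chi_1$, the four characters $\chi_1,\sigma\chi_1,\tau\chi_1,\sigma\tau\chi_1$ are distinct, and $b=4$. Until you supply an argument of comparable force for the regular $V_4$ case, the proof is incomplete.
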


In Section 3, we will present an example showing how one cannot replace the number $5$ by any larger integer. As a matter of fact, this example contains $4$ irrational irreducible characters and $6$ irrational conjugacy classes, showing that a version of \Cref{TheoremA} would fail for irreducible characters as opposed to conjugacy classes.

Interestingly, the Navarro-Tiep conjecture also cannot be extended further in its current statement. One of the smallest counterexamples happens to have $4$ rational conjugacy classes and $6$ rational irreducible characters, and, as we will see in Section $3$, we have not been able to find a counter-example in the other direction of that conjecture. This suggests some sort of duality between results on \emph{rational irreducible characters} and those on \emph{irrational conjugacy classes}. It would be interesting to analyze if there is any deeper reason for this apparent connection.

There has also been much work devoted to studying how the degrees of the extensions $\Q(\chi)$ and $\Q(x)$ over $\Q$ affect the structure of $G$. Let $\pi(G)$ denote the set of prime divisors of $|G|$. In \cite{Gow:Rational}, R. Gow showed that solvable rational groups satisfy $\pi(G) \subset \{2, 3, 5\}$. These results are then extended both by D. Chillag and S. Dolfi in \cite{CD:Semirational}, who considered the case where the extensions $\Q(x)$ were at most quadratic for all $x \in G$, and by J. Tent in \cite{Tent:Quadratic}, who considered the analogous situation for irreducible characters. In this direction, we show the following:

\begin{Theorem}
	\label{TheoremB}
	Let $G$ be a finite solvable group. Then, if $G$ has exactly $2$ or $3$ irrational conjugacy classes, $\pi(G) \subseteq \{2, 3, 5, 7\}$;
\end{Theorem}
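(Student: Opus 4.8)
The plan is to bound the number of primes dividing $|G|$ by exploiting the structure of irrational classes in a solvable group with very few of them. Let me think about how irrational classes arise and what having only 2 or 3 of them forces.

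Key observations: a class of $x$ is rational iff $x$ is conjugate to $x^j$ for every $j$ coprime to the order of $x$, i.e. iff $N_G(\langle x\rangle)/C_G(x)$ acts as the full automorphism group $(\Z/|x|\Z)^\times$ on $\langle x\rangle$. So irrationality is controlled by element orders and the action of normalizers. For each prime $p$, elements of order $p$ contribute irrational classes unless the Galois action identifies all the generators of $\langle x\rangle$, which requires $N_G(\langle x\rangle)/C_G(x) \cong (\Z/p\Z)^\times$, a cyclic group of order $p-1$.

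Let me think about the strategy...
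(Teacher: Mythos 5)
Your proposal stops at the point where the actual work begins: after recording the standard characterization of rational classes via $N_G(\langle x\rangle)/C_G(x) \cong \Aut(\langle x\rangle)$, you have not produced any mechanism that bounds the primes dividing $|G|$. The observation you make cannot do this on its own. For \emph{any} prime $p$, the Frobenius group $V \rtimes H$ with $V \cong C_p$ and $H \cong C_{p-1}$ acting faithfully has all classes of elements of order $p$ rational, so ``the generators of $\langle x\rangle$ are all fused'' places no restriction whatsoever on $p$ by itself. The restriction only appears when you combine (i) the rationality (or near-rationality) of the classes inside a minimal normal $p$-subgroup $V$, which gives the conjugation action of a $p$-complement $H$ on $V$ an eigenvalue property of order roughly $p-1$, with (ii) the fact that $H \cong G/V$ itself has at most $2$ or $3$ irrational classes, hence a very small field of values for the Brauer character of $V$. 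Reconciling (i) and (ii) is a genuinely nontrivial step: the paper does it by quoting Farias e Soares' theorem on big primes and character values for solvable groups, which says that an eigenvalue property of order $k$ forces either $[\Q(\varphi):\Q] \geq p/\sqrt{3}$ or a primitive $(k/(k,4))$-th root of unity inside $\Q(\varphi)$. Nothing in your sketch plays this role.

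You are also missing the reduction that makes the above applicable: an induction on $|G|$ showing one may assume $G$ has a unique minimal normal subgroup $V$ (otherwise embed $G$ in $G/M_1 \times G/M_2$), that $V$ is an elementary abelian Sylow $p$-subgroup for the hypothetical bad prime $p$, and a case split according to whether the irrational classes meet $V$ or not --- each branch requiring a separate verification of the eigenvalue property ($(p-1)$, $(p-1)/2$ or $(p-1)/3$, depending on the case) before the quoted theorem can be applied. As written, your proposal is a statement of intent rather than a proof, and the intended route (bounding $p$ purely from the local condition on $N_G(\langle x\rangle)/C_G(x)$ for elements of order $p$) would fail for the reason given above.
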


We note that we were unable to find any group with order divisible by $7$ and exactly two irrational conjugacy classes, which perhaps indicates that this result could be improved for that case.

\Cref{TheoremB} suggests, comparing it to the aforementioned results of \cite{Gow:Rational}, that solvable groups with very few irrational conjugacy classes tend to resemble solvable rational groups in some ways. In fact, the number of irrational conjugacy classes always puts a bound on the prime divisors of a solvable group $G$, which is a consequence of the following result:

\begin{Theorem}
	\label{TheoremC}
	Let $n$ be a positive integer. Then, for all finite groups $G$ with exactly $n$ irrational conjugacy classes, the number of irrational irreducible characters of $G$ is bounded in terms of $n$. In particular, there exists a natural number $k = f(n)$ such that $[\Q(\chi): \Q] \leq k$ for all $\chi \in \Irr(G)$.
\end{Theorem}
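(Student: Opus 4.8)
The plan is to bound the number of irrational characters in terms of the number of irrational classes, and then deduce the degree bound from it.
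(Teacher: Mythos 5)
Your plan matches the paper's high-level structure exactly: the paper first proves (its Theorem 4.3) that a group with exactly $n$ irrational conjugacy classes has at most $n^2/2$ irrational irreducible characters, and then deduces the degree bound from the observation that $[\Q(\chi):\Q]$ equals the size of the orbit of $\chi$ under $\Gal(\Q(\chi):\Q)$ (the stabilizer being trivial), an orbit consisting entirely of irrational characters. So the second step of your plan is essentially a one-line deduction once the first step is in hand.

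However, the proposal contains no argument at all for the first step, which is the entire mathematical content of the theorem. It is not obvious that few irrational classes forces few irrational characters, and the paper has to work for it: the Galois group $\Gal(\Q_e{:}\Q)$, $e = \exp(G)$, acts on the $n$ irrational classes through an abelian subgroup $H \le S_n$; by Brauer's permutation lemma each cyclic subgroup of $H$ moves the same number of characters as classes, hence at most $n$ characters; and a separate combinatorial lemma shows an abelian subgroup of $S_n$ is a direct product of at most $n/2$ nontrivial cyclic factors, so $H$ moves at most $n\cdot(n/2) = n^2/2$ characters in total. Since every irrational character is moved by some Galois automorphism, the bound follows. Without some version of this argument (in particular, without invoking Brauer's permutation lemma to transfer information from classes to characters, and without controlling how many cyclic pieces the abelian image can have), your plan is only a restatement of what needs to be proved.
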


The same exact result remains true when one replaces ``conjugacy classes'' by ``irreducible characters'', as will be made clear by the proof in Section 4.

\section{Preliminaries}

Before moving to the proofs of the main results, it will be useful to recall some of the basic results on rational elements and Galois actions. Throughout the rest of the paper, $G$ will always denote a finite group.

First, given $n \in \mathbb{N}$, we write $\Q_n = \Q(\zeta)$, where $\zeta$ is a primitive $n$-root of unity. It is well-known that, for all $n$, $\Gal(\Q_n {:} \Q)$ is an abelian group isomorphic to $(\Z_n)^\times$ and thus, of order $\phi(n)$, where $\phi$ is the Euler totient function. In fact, given $\sigma \in \Gal(\Q_n {:} \Q)$, there exists a unique $r_\sigma \in \Z$, with $0 < r_\sigma < n$ and $(r_\sigma, n) = 1$, such that $\sigma(\zeta) = \zeta^{r_\sigma}$. Using the ``bar convention'' for elements of $\Z_n$, the map $\sigma \mapsto \overline{r_\sigma}$ gives the explicit group isomorphism.

If $n = \exp(G)$, which is the smallest positive integer $m$ such that $x^m = 1$ for all $x \in G$, then $\Gal(\Q_n {:} \Q)$ acts on the set $\Irr(G)$ via $(\sigma \cdot \chi)(x) = \sigma(\chi(x))$. Furthermore, if $x \in G$, we may define $\sigma \cdot x = x^{r_\sigma}$. This induces an action of $\Gal(\Q_n {:} \Q)$ on $\cl(G)$, with the property that $(\sigma\cdot\chi)(x) = \chi(\sigma \cdot x)$, for all $x \in G$, $\chi \in \Irr(G)$. Using these actions, we state the following characterization of rational conjugacy classes, which will be used without further mention. We write $o(x)$ for the order of $x$ in $G$.

\begin{lemma}
	For a finite group $G$ of order $N$ and $x \in G$, the following are equivalent:
	\begin{enumerate}
		\item $x^G$ is a rational conjugacy class;
		\item $x$ is conjugate to $x^m$ for all $0 < m < n$ coprime with $n = o(x)$;
		\item $\sigma \cdot (x^G) = x^G$ for all $\sigma \in \Gal(\Q_N {:} \Q)$;
		\item $\Aut(\langle x \rangle) \cong N_G(\langle x \rangle)/C_G(x)$;
	\end{enumerate}
\end{lemma}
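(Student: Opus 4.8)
The plan is to establish the four conditions as equivalent through a short cycle of implications, resting on two standard ingredients: that the irreducible characters of $G$ separate its conjugacy classes, and that the reduction map $(\Z_N)^\times \to (\Z_n)^\times$ is surjective whenever $n \mid N$ (equivalently, that the restriction $\Gal(\Q_N{:}\Q) \to \Gal(\Q_n{:}\Q)$ is onto, since $\Q_n \subseteq \Q_N$). Throughout I write $H = \langle x \rangle$ and record at the outset that $C_G(x) = C_G(H)$, since centralizing $x$ is the same as centralizing all of its powers. Conjugation then yields the familiar injective homomorphism $N_G(H)/C_G(x) \hookrightarrow \Aut(H)$, and because $|\Aut(H)| = \phi(n)$ with $n = o(x)$, condition (iv) is precisely the assertion that this embedding is surjective.

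For (i) $\Leftrightarrow$ (iii): if $x^G$ is rational then every value $\chi(x)$ lies in $\Q$ and is therefore fixed by all $\sigma \in \Gal(\Q_N{:}\Q)$; invoking the identity $(\sigma\cdot\chi)(x) = \chi(\sigma\cdot x) = \chi(x^{r_\sigma})$ from the preliminaries, this forces $\chi(x^{r_\sigma}) = \chi(x)$ for every $\chi \in \Irr(G)$, so $x^{r_\sigma}$ and $x$ agree on all characters and are hence conjugate, which is exactly $\sigma\cdot(x^G) = x^G$. The converse reverses this: if each class $(x^{r_\sigma})^G$ coincides with $x^G$, then each $\chi(x)$ is fixed by every element of $\Gal(\Q_N{:}\Q)$ and so lies in $\Q$.

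For (iii) $\Leftrightarrow$ (ii): since $x$ has order $n \mid N$, the element $x^{r_\sigma}$ depends only on $r_\sigma$ modulo $n$, and by surjectivity of $(\Z_N)^\times \to (\Z_n)^\times$ the residues $r_\sigma \bmod n$ run through all of $(\Z_n)^\times$ as $\sigma$ ranges over the Galois group; thus ``$x \sim x^{r_\sigma}$ for all $\sigma$'' says exactly the same as ``$x \sim x^m$ for all $m$ coprime to $n$.'' For (ii) $\Leftrightarrow$ (iv): whenever $(m,n)=1$ and $g x g^{-1} = x^m$, the element $g$ automatically normalizes $H = \langle x \rangle = \langle x^m \rangle$, so the automorphism $x \mapsto x^m$ lies in the image of $N_G(H)/C_G(x) \hookrightarrow \Aut(H)$ if and only if $x^m$ is conjugate to $x$. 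As every automorphism of $H$ has the form $x \mapsto x^m$ with $(m,n)=1$, surjectivity of the embedding (condition (iv)) is equivalent to $x \sim x^m$ for all such $m$ (condition (ii)), closing the loop.

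The work here is essentially bookkeeping rather than any deep obstacle. The one point that requires care is that the relevant residues must be taken modulo $n = o(x)$ rather than modulo $N$, which is exactly why the surjectivity of $(\Z_N)^\times \to (\Z_n)^\times$ is the crucial ingredient; alongside this one must verify the identification $C_G(x) = C_G(\langle x\rangle)$ and the automatic membership $g \in N_G(H)$, so that condition (iv) genuinely translates into a statement about conjugacy among the powers of $x$.
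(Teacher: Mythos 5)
Your proposal is correct and rests on the same ingredients as the paper's proof: the separation of conjugacy classes by irreducible characters, the lifting of residues coprime to $o(x)$ to residues coprime to $|G|$ (your surjectivity of $(\Z_N)^\times \to (\Z_n)^\times$ is exactly the paper's Chinese Remainder Theorem step), and the identification of $N_G(\langle x\rangle)/C_G(x)$ with a subgroup of $\Aut(\langle x\rangle)$. The only difference is organizational — you arrange the implications as a chain $(i)\Leftrightarrow(iii)\Leftrightarrow(ii)\Leftrightarrow(iv)$ rather than the paper's order — which does not change the substance.
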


\begin{proof}
	Suppose there exists some $m$ coprime with $n = o(x)$ such that $x$ is not conjugate to $x^m$, with $0 < m < n$. Then, writing $|G| = uv$, where the set of prime divisors of $u$ and $n$ are equal, and $(n, v) = 1$, we can choose $0 < M < N$ such that $M \equiv m \pmod{n}$ and $M \equiv 1 \pmod{v}$. Thus, $M$ is coprime to $N$ and $x^M = x^m$. Taking $\sigma \in \Gal(\Q_N {:} \Q)$ such that $\sigma(\zeta) = \zeta^M$, there must be some irreducible character $\chi \in \Irr(G)$ such that $\chi(x) \neq \chi(x^m) = \chi(\sigma \cdot x)$. Hence, $\chi(x) \neq \sigma(\chi(x))$, meaning $\chi(x) \not \in \Q$ and $x^G$ is not rational. 
	
	On the other hand, if $x$ is conjugate to $x^m$ for all $m$ coprime with $n$, then the same logic as before shows that $x$ is conjugate to $x^k$ for all $k$ coprime with $|G|$. This means by definition, $\sigma \cdot x^G = x^G$ for all $\sigma \in \Gal(\Q_N {:} \Q)$. This last condition also implies $\sigma(\chi(x)) = \chi(\sigma \cdot x) = \chi(x)$ for all $\sigma \in \Gal(\Q_N {:} \Q)$ and for all $\chi \in \Irr(G)$, meaning $x^G$ is rational.
	
	To see the equivalence between the first three statements and the fourth one, note that $N_G(\langle x \rangle)$ acts on $\langle x \rangle$ by conjugation, with kernel $C_G(x)$. This means we may identify the quotient $N_G(\langle x \rangle)/C_G(x)$ as a subgroup of $\Aut(\langle x \rangle)$. Also, the latter consists of maps sending $x$ to $x^k$ for all $0 < k < n$ coprime with $n$. Thus, it follows from the reasoning before that $x^G$ is rational if and only if every element of $\Aut(\langle x \rangle)$ can be obtained by the conjugation action of $N_G(\langle x \rangle)$.
\end{proof}

We also collect the following basic results on rational and irrational elements, which can be helpful in certain situations.

\begin{lemma}
	\label{basicLemma}
	Let $G$ be a finite group. Then:
	\begin{enumerate}[label=(\alph*)]
		\item If $H \leq G$ and $x \in H$ is rational in $H$, then $x$ is rational in $G$;
		\item If $N \unlhd G$ and $x \in G$ is rational, then $xN$ is rational in $G/N$;
		\item If $N \unlhd G$, $(o(x), |N|) = 1$ and $x$ is irrational in $G$, then $xN$ is irrational in $G/N$;
		\item If $x \in G$ is rational, then so are all powers of $x$;
		\item If $x \in G$ is irrational and $z \in C_G(x)$ has order coprime with that of $x$, then $xz$ is also irrational;
	\end{enumerate}
\end{lemma}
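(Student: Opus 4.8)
The plan is to reduce every part to the group-theoretic description of rationality supplied by the previous lemma: the class $x^G$ is rational precisely when $x$ is conjugate to $x^m$ for all $0 < m < n$ coprime to $n = o(x)$, equivalently when $\sigma \cdot x^G = x^G$ for every $\sigma \in \Gal(\Q_{|G|} {:} \Q)$, where $\sigma$ acts by $\sigma \cdot x = x^{r_\sigma}$. Items (a), (b), (d) fall out quickly from this, (e) is deduced from (d), and the coprime converse (c) carries the real content.

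For (a), rationality of $x$ in $H$ gives conjugacies $x \sim_H x^m$ for all $m$ coprime to $o(x)$, and these hold a fortiori in $G$, so $x$ is rational in $G$. For (b), I would argue at the level of fields: inflating the irreducible characters of $G/N$ to $G$ exhibits $\Irr(G/N)$ as a subset of $\Irr(G)$, whence $\Q(xN) \subseteq \Q(x)$, and $x$ rational forces $xN$ rational. For (d), I would use the Galois formulation: if $\sigma \cdot x^G = x^G$ for every $\sigma$, then $x^{r_\sigma}$ is conjugate to $x$, so $\sigma \cdot (x^k) = (x^k)^{r_\sigma} = (x^{r_\sigma})^k$ is conjugate to $x^k$; hence $\sigma \cdot (x^k)^G = (x^k)^G$ for all $\sigma$, and $x^k$ is rational. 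Part (e) is then immediate: since $z$ centralizes $x$ and $o(z)$ is coprime to $o(x)$, the cyclic group $\langle xz \rangle$ equals $\langle x \rangle \times \langle z \rangle$, so $x$ is a power of $xz$; were $xz$ rational, (d) would make $x$ rational, contrary to hypothesis.

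Part (c) is the crux, being the only partial converse to (b) and the only place the hypothesis $(o(x), |N|) = 1$ is used. I would argue contrapositively, assuming $xN$ rational in $G/N$ and deducing $x$ rational in $G$. Coprimality first forces $o(xN) = o(x) = n$, since any power of $x$ lying in $N$ has order dividing both $n$ and $|N|$ and is therefore trivial. Now fix $m$ coprime to $n$; rationality of $xN$ gives $g \in G$ with $a := g x g^{-1} \in x^m N$. Inside $K := \langle x^m \rangle N$, both $\langle a \rangle$ and $\langle x^m \rangle$ are complements of the normal subgroup $N$, as each has order $n$, meets $N$ trivially by coprimality, and generates $K$ together with $N$. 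Since $K/N$ is cyclic, the conjugacy clause of the Schur--Zassenhaus theorem applies and yields $h \in K$ with $h \langle a \rangle h^{-1} = \langle x^m \rangle$; because $K/N$ is abelian, $h a h^{-1}$ remains in the coset $aN = x^m N$, which meets the complement $\langle x^m \rangle$ only in $x^m$. Thus $h a h^{-1} = x^m$, so $x \sim_G x^m$; as $m$ was arbitrary, $x$ is rational.

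I expect the Schur--Zassenhaus step in (c) to be the main obstacle, since its conjugacy conclusion must be invoked without disturbing the paper's independence from the Classification: the general statement needs one of $N$, $K/N$ to be solvable, and I would secure this cheaply from the fact that $K/N$ is cyclic. Everything else is order-counting and coset bookkeeping that is routine once this reduction is in place.
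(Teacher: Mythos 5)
Your proof is correct, but it diverges from the paper's in two places worth noting. The paper disposes of (c) and (d) by citing Lemma 5.1 of Navarro--Tiep \cite{NavTiep:Rational}, whereas you prove (c) from scratch: the Schur--Zassenhaus argument in $K=\langle x^m\rangle N$ is sound --- both $\langle a\rangle$ and $\langle x^m\rangle$ are complements of $N$ in $K$, the cyclic (hence solvable) quotient $K/N$ legitimises the conjugacy clause without Feit--Thompson, and the observation that $hah^{-1}$ stays in the coset $x^mN$, which meets $\langle x^m\rangle$ only in $x^m$, correctly pins down the conjugating element. This buys a self-contained, CFSG-free proof at the cost of a page of coset bookkeeping that the paper outsources. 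For (e), the paper runs a direct congruence computation (choose $k\equiv m\pmod{o(x)}$, $k\equiv 1\pmod{o(z)}$, raise to the power $o(z)$), while you observe that $x$ is a power of $xz$ because $\langle xz\rangle=\langle x\rangle\times\langle z\rangle$, and then invoke (d); your reduction is cleaner and makes the logical dependence on (d) explicit, where the paper in effect reproves a special case of (d) by hand. Parts (a), (b), (d) match the paper's intent (it derives (a) and (b) from the characterization lemma), with your inflation argument for (b) being a minor, equally valid variant.
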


\begin{proof}
	Items (a) and (b) follow from the previous lemma. Item (c) is an immediate consequence of \cite[Lemma 5.1 (e)]{NavTiep:Rational} and item (d) is \cite[Lemma 5.1 (d)]{NavTiep:Rational}. 
	
	Finally, for (e), take some $m$ coprime with $o(x)$ such that $x$ is not conjugate to $x^m$. Take $k \in \mathbb{N}$ such that $k \equiv m \pmod{o(x)}$ and $k \equiv 1 \pmod{o(z)}$. Then, $k$ is coprime to $o(xz) = o(x)o(z)$ and, assuming $xz$ is rational, we would need to have $xz$ conjugate to $(xz)^k = x^m z$. But then, elevating to the power $o(z)$, $x^{o(z)}$ is conjugate to $x^{mo(z)}$. As $o(z)$ is coprime to $o(x)$, we can pick $j \in \mathbb{N}$ such that $o(z) j \equiv 1 \pmod{o(x)}$. Then, $x = x^{o(z)j}$ is conjugate to $x^{m}$, a contradiction.
\end{proof}

We make heavy use of the following standard result, which is a consequence of Brauer's Lemma on Character Tables. Recall that two actions of a group $G$ on sets $\Omega_1$ and $\Omega_2$ are deemed \emph{permutation isomorphic} if there exists a bijection $f: \Omega_1 \to \Omega_2$ satisfying $f(g\cdot \omega) = g \cdot f(\omega)$ for all $\omega \in \Omega_1$.

\begin{lemma}
	\label{BLCT}
	Let $G$ be a finite group, let $n = \exp(G)$ and let $\mathcal{G} = \Gal(\Q_n {:} \Q)$. Then, if $\mathcal{H}$ is any cyclic subgroup of $\mathcal{G}$, the actions of $\mathcal{H}$ on $\Irr(G)$ and $\cl(G)$ are permutation isomorphic.
\end{lemma}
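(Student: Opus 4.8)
The plan is to reduce the statement to a single invariant of the generator of $\mathcal{H}$ and then exploit the special structure of cyclic group actions. Fix a generator $\sigma$ of $\mathcal{H}$ and write $m = |\mathcal{H}|$. The starting point is the compatibility relation $(\sigma\cdot\chi)(x) = \chi(\sigma\cdot x)$ recorded in the preliminaries: for each integer $j$ it says precisely that the permutation $\sigma^j$ induces on $\Irr(G)$ and the permutation it induces on $\cl(G)$ are dual with respect to the character table. I would therefore apply Brauer's Lemma on Character Tables to each such compatible pair to conclude that, for every integer $j$, the permutation $\sigma^j$ fixes the same number of irreducible characters as it fixes conjugacy classes. (The reason this holds is that, writing $X$ for the invertible character-table matrix and $P, Q$ for the permutation matrices of the two actions of $\sigma$, the compatibility relation gives $PX = XQ$, so $P$ and $Q$ are similar and share their characteristic polynomial.)

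The next step converts these fixed-point counts into orbit data. For a permutation $\pi$ of order dividing $m$ acting on a finite set, let $a_d$ denote the number of $\pi$-orbits of size $d$, where $d \mid m$. An element lying in an orbit of size $d$ is fixed by $\pi^j$ exactly when $d \mid j$, equivalently when $d \mid \gcd(j,m)$, so the number of fixed points of $\pi^j$ equals $\sum_{d \mid \gcd(j,m)} d\, a_d$. Letting $j$ range over the divisors of $m$ yields a triangular linear system that recovers the numbers $a_d$ by Möbius inversion. Hence the equality of fixed-point counts of all powers $\sigma^j$, established in the first paragraph, forces the orbit distributions of $\sigma$ on $\Irr(G)$ and on $\cl(G)$ to coincide; equivalently, $\sigma$ has the same cycle type on both sets.

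Finally I would invoke the cyclicity of $\mathcal{H}$. Any $\mathcal{H}$-set decomposes into $\sigma$-orbits, and an orbit of size $d$ is a transitive $\mathcal{H}$-set whose point-stabilizer has index $d$ in $\mathcal{H}$. Since the cyclic group $\mathcal{H}$ possesses a \emph{unique} subgroup of each index dividing $m$, any two orbits of the same size are isomorphic as $\mathcal{H}$-sets. Thus the $\mathcal{H}$-set is determined up to permutation isomorphism by the number of $\sigma$-orbits of each size, i.e.\ by the cycle type of $\sigma$. As the second paragraph shows these cycle types agree, one can match up orbits of equal size and glue the corresponding equivariant bijections into the required permutation isomorphism $f\colon \Irr(G) \to \cl(G)$.

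The step carrying the real content is this last passage, from \textbf{same cycle type of the single generator} to \textbf{permutation isomorphic group actions}: it is exactly here that cyclicity of $\mathcal{H}$ is indispensable, resting on the unique-subgroup property, and indeed the conclusion is false for a general (even abelian) $\mathcal{G}$. By comparison, the invocation of Brauer's Lemma and the Möbius inversion, while essential, are the routine ingredients.
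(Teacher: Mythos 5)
Your proposal is correct and complete. The comparison with the paper is a bit lopsided, however: the paper does not actually prove this lemma, it simply cites \cite[Theorem 3.3]{Navarro:McKay}. What you have written is essentially the standard proof of that cited result, and all three steps are sound: the similarity $PX = XQ$ of the two permutation matrices via the invertible character table gives equal fixed-point counts for every power $\sigma^j$ (this is Brauer's Lemma applied elementwise); M\"obius inversion over the divisors of $m$ recovers the orbit-size distribution from those counts; and the unique-subgroup-per-index property of a cyclic group upgrades ``same cycle type of a generator'' to ``permutation isomorphic $\mathcal{H}$-sets.'' Your closing remark correctly isolates where cyclicity is indispensable --- for a non-cyclic abelian Galois group the full actions on $\Irr(G)$ and $\cl(G)$ need not be permutation isomorphic even though every cyclic subgroup's actions are, which is precisely why the paper must work orbit-by-orbit with individual automorphisms $\sigma$, $\tau$, $\sigma\tau$ in the proof of Theorem A rather than invoking a global isomorphism. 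The only thing your write-up buys beyond the paper is self-containedness; the only thing the citation buys is brevity.
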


\begin{proof}
	See \cite[Theorem 3.3]{Navarro:McKay}.
\end{proof}

In order to prove some of our main theorems, we will also need a fact about abelian subgroups of $S_n$, the symmetric group on $n$ symbols.

\begin{lemma}
	\label{SubgroupsSn}
	Let $G$ be an abelian subgroup of $S_n$ and write $G \cong C_{q_1} \times \cdots \times C_{q_k}$, where $1 \neq q_i$ for all $i$ and they are all prime powers. Then, $q_1 + \cdots + q_k \leq n$.
\end{lemma}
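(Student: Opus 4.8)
The plan is to translate the statement into the language of faithful permutation actions, reduce it by duality to an extremal problem about generating a finite abelian group by subgroups, and then prove that inequality by a projection induction.

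First I would regard the inclusion $G \hookrightarrow S_n$ as a faithful action of $G$ on a set $\Omega$ of size $n$ and decompose $\Omega$ into $G$-orbits $O_1,\dots,O_t$. Since $G$ is abelian, every transitive constituent is regular: on $O_j$ all point stabilizers coincide with the kernel $K_j$ of the action on $O_j$, so $|O_j| = [G:K_j]$. Faithfulness gives $\bigcap_j K_j = 1$, hence $n = \sum_j [G:K_j]$, and the diagonal map is an embedding $G \hookrightarrow \prod_j G/K_j$. Thus it suffices to show $\sum_j [G:K_j] \ge \sum_i q_i$.

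Next I would dualize. Applying $\mathrm{Hom}(-,\mathbb{C}^\times)$ to $G \hookrightarrow \prod_j G/K_j$ yields a surjection $\prod_j \widehat{G/K_j} \twoheadrightarrow \widehat{G}$, so the subgroups $V_j := \{\chi \in \widehat{G} : \chi|_{K_j} = 1\} \le \widehat{G}$ generate $\widehat{G}$ and satisfy $|V_j| = [G:K_j]$. As $\widehat{G} \cong G \cong \prod_i C_{q_i}$, the lemma reduces to: \emph{if subgroups $V_1,\dots,V_t$ generate a finite abelian group $A \cong \prod_i C_{q_i}$ (primary decomposition), then $\sum_j |V_j| \ge \sum_i q_i$.} I would prove this by induction on the number $k$ of primary cyclic factors. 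Choose a factor of maximal order $q_1$ and let $\pi\colon A \to C_{q_1}$ be the projection onto a cyclic direct summand of that order. The images $\pi(V_j)$ generate the cyclic $p$-group $C_{q_1}$, whose subgroups are totally ordered, so some $\pi(V_{j_0})$ equals $C_{q_1}$; lifting a generator and passing to its $p$-part (legitimate because $q_1$ is the largest power of its prime dividing $\exp(A)$) produces $w \in V_{j_0}$ of order exactly $q_1$ with $A = \langle w\rangle \oplus A'$, where $A' \cong \prod_{i\ge 2} C_{q_i}$ by uniqueness of the primary decomposition. Writing $\sigma\colon A \to A'$ for the projection along $C := \langle w\rangle$, the subgroups $\sigma(V_j)$ generate $A'$, so induction gives $\sum_j |\sigma(V_j)| \ge \sum_{i\ge 2} q_i$; and since $C \le V_{j_0}$ we have $|V_j| = |\sigma(V_j)|\,|V_j\cap C|$ with $|V_{j_0}\cap C| = q_1$.

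The hard part is to combine these estimates without losing a unit. Summing the relation for $|V_j|$ gives $\sum_j |V_j| \ge \sum_j |\sigma(V_j)| + |\sigma(V_{j_0})|\,(q_1-1)$, which falls just short of the target if one bounds $|\sigma(V_{j_0})| \ge 1$ crudely. I would therefore split into two cases. If $V_{j_0} = C$, then $\sigma$ kills $V_{j_0}$, so the remaining $\sigma(V_j)$ already generate $A'$, giving $\sum_{j\ne j_0} |V_j| \ge \sum_{i\ge 2} q_i$, and adding $|V_{j_0}| = q_1$ finishes it. Otherwise $\sigma(V_{j_0}) \ne 1$, so $|\sigma(V_{j_0})| \ge 2$ and hence $|\sigma(V_{j_0})|(q_1-1) \ge q_1$ (using $q_1 \ge 2$), whence the displayed inequality again yields $\sum_j |V_j| \ge \sum_i q_i$. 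Closing this case distinction cleanly—together with the choice of a maximal-order factor that guarantees $w$ can be taken of order exactly $q_1$ inside $V_{j_0}$—is the crux of the argument; by contrast, the orbit decomposition and the duality step are routine.
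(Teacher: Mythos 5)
Your argument is correct, and after the common opening move it takes a genuinely different route from the paper's. Both proofs start the same way: decompose $\{1,\dots,n\}$ into orbits, note that an abelian transitive constituent is regular, so $n=\sum_j[G:K_j]$ with $\bigcap_j K_j=1$ and $G$ embedding diagonally into $\prod_j G/K_j$. The paper then finishes directly: for each primary cyclic factor $C_{q_i}$ it uses the lcm formula for orders in a direct product to find some orbit-quotient containing an element of order $q_i$, and concludes $\sum_i q_i\le\sum_j|G/K_j|$ via the remark that a sum of integers greater than $1$ is at most their product. You instead dualize, reducing to the statement that subgroups $V_1,\dots,V_t$ generating $A\cong\prod_i C_{q_i}$ satisfy $\sum_j|V_j|\ge\sum_i q_i$, and prove that by induction, splitting off a maximal-order cyclic summand $\langle w\rangle$ through a $V_{j_0}$ that surjects onto it. What your route buys is precision at exactly the point where the paper is tersest: when several primary factors get assigned to the same orbit-quotient $G_j$, the sum-versus-product step needs $\prod q_i\le|G_j|$, which does not follow merely from $G_j$ containing elements of each order $q_i$ (and not every choice of assignment works, as one sees already for $C_2\times C_2$ with kernels $\langle b\rangle$ and $\langle ab\rangle$); your case split on whether $V_{j_0}=\langle w\rangle$ does precisely the bookkeeping that makes this sound, at the cost of invoking duality and the splitting-off of a maximal-order summand. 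In effect you have reconstructed the standard proof that the minimal faithful permutation degree of a finite abelian group equals the sum of the orders of its primary cyclic factors, which is somewhat longer than the paper's sketch but fully airtight.
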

\begin{proof}
	Consider the orbits $\{\mathcal{O}_1, ..., \mathcal{O}_t\}$ of the natural action of $G$ on the set $\{1, ..., n\}$. Then, there exists an action of $G$ on each orbit $\mathcal{O}_i$. This can be expressed as a homomorphism $f_i : G \to S_{l_i}$, where $l_i = |\mathcal{O}_i|$. The action of $G$ is equivalent to that of $G/\ker f_i$, and so, denoting $f_i(G) = G_i$, $G_i$ is an abelian subgroup of $S_{l_i}$ acting transitively on $\mathcal{O}_i$. 
	
	Fix some $x \in \mathcal{O}_i$. For all $y \in \mathcal{O}_i$, there exists some $\sigma_y \in G_i$ such that $\sigma_y(x) = y$, by transitivity. Furthermore, $\sigma_y$ is uniquely determined. Indeed, if $\tau$ has the same property, then, given $z \in \mathcal{O}_i$, take $\psi \in G_i$ with $\psi(x) = z$. Then, $\sigma_y(z) = \sigma_y (\psi(x)) = \psi(\sigma_y(x)) = \psi(\tau(x)) = \tau(\psi(x)) = \tau(z)$. As $z$ is arbitrary, this shows $\sigma_y = \tau$. Ergo, $\sigma_y$ is uniquely determined by $y$, and one easily sees that $y \mapsto \sigma_y$ is a bijection from $\mathcal{O}_i$ to $G_i$. In particular, $|G_1| + \cdots + |G_t| = |\mathcal{O}_1| + \cdots + |\mathcal{O}_t| = n$. 
	
	Now, $G$ can naturally be embedded in the product $G_1 \times \cdots \times G_t$ via $f: G \to G_1 \times \cdots \times G_t$ mapping $g$ to $(f_1(g), ..., f_t(g))$; indeed, this is injective because, if $g \in \bigcap_i \ker f_i$, then $g$ acts trivially on all points of all orbits, meaning $g = 1$. In particular, $C_{q_i} \leq G_1 \times \cdots \times G_t$ for all $i$. Since the order of an element of a direct product is the least common multiple of the orders of the factors and the $q_i$ are prime powers, it follows, for each $i$, that there exists $j \in \{1, ..., t\}$ such that $C_{q_i} \subset G_j$. Since the sum of two positive integers greater then $1$ is always less than or equal to their product, one sees that $q_1 + \cdots + q_k \leq |G_1| + \cdots + |G_t| = n$.
\end{proof}

\begin{corollary}
	\label{keyCorollary}
	An abelian subgroup of $S_n$ can be expressed as a direct product of at most $n/2$ non-trivial groups.
\end{corollary}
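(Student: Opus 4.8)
The plan is to apply \Cref{SubgroupsSn} directly, so the entire content of the corollary is essentially already in hand. First I would invoke the fundamental theorem of finite abelian groups to write the given abelian subgroup $G \leq S_n$ as $G \cong C_{q_1} \times \cdots \times C_{q_k}$, where each $q_i$ is a prime power strictly greater than $1$. This is precisely the form required by \Cref{SubgroupsSn}, and it is also the finest decomposition of $G$ into non-trivial direct factors, so it suffices to bound the number $k$ of cyclic factors appearing here.

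The key observation is the trivial bound $q_i \geq 2$, valid because every non-trivial prime power is at least $2$. Combining this with the conclusion $q_1 + \cdots + q_k \leq n$ of \Cref{SubgroupsSn} gives the chain $2k \leq q_1 + \cdots + q_k \leq n$, and hence $k \leq n/2$. This already exhibits $G$ as a direct product of $k \leq n/2$ non-trivial cyclic groups, which is exactly the assertion.

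I expect no genuine obstacle in this argument: all the difficulty was absorbed into \Cref{SubgroupsSn}, and the only extra ingredient is the elementary inequality $q_i \geq 2$. If one wishes to record the slightly stronger fact that \emph{every} decomposition of $G$ into non-trivial direct factors uses at most $n/2$ of them, I would simply remark that any such decomposition can be refined to the prime-power cyclic decomposition by splitting each factor further, so the number of factors in the original decomposition is at most $k$, and therefore at most $n/2$.
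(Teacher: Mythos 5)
Your argument is correct and is essentially the paper's own proof: the paper also just invokes the primary decomposition together with Lemma~\ref{SubgroupsSn}, and your chain $2k \leq q_1 + \cdots + q_k \leq n$ is the implicit step being used. Your closing remark about refining an arbitrary decomposition into the prime-power one is a nice explicit touch but adds nothing beyond what the paper intends.
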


\begin{proof}
	This is a consequence of the previous result and the primary decomposition of finite abelian groups.
\end{proof}

\section{Proof of Theorem A}

With the previous elementary facts dealt with, we can move on to the proof of our first main theorem. In order to do so, we need a simple lemma. Notice how each rational conjugacy class is a fixed point of the action by the appropriate Galois group, meaning there is an induced action on the irrational conjugacy classes.

\begin{lemma}
	\label{mainLemma}
	Let $G$ be a group with exactly $4$ irrational conjugacy classes, let $n = \exp(G)$ be its exponent and let $\varphi: \Gal(\Q_n{:}\Q) \to S_4$ be the homomorphism corresponding to the Galois action on the four irrational classes. Then, unless possibly when the image of $\varphi$ is the Klein $4$-group $K \unlhd S_4$, $G$ has exactly $4$ irrational irreducible characters.
\end{lemma}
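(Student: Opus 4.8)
The plan is to compare the two $\mathcal G$-actions on $\cl(G)$ and $\Irr(G)$, where $\mathcal G = \Gal(\Q_n{:}\Q)$, using that \Cref{BLCT} renders them permutation isomorphic on every cyclic subgroup. Write $X$ for the set of four irrational classes and $Y$ for the set of irrational irreducible characters; the rational classes (resp.\ characters) are exactly the global fixed points of $\mathcal G$ on $\cl(G)$ (resp.\ $\Irr(G)$). Since $\mathcal G \cong (\Z_n)^\times$ is abelian, $H := \varphi(\mathcal G) \le S_4$ is abelian. My first step is to show the $\mathcal G$-action on $\Irr(G)$ also factors through $H$: any $\tau \in \ker\varphi$ fixes every rational class and, by definition of $\ker\varphi$, every irrational class, so it acts trivially on $\cl(G)$; applying \Cref{BLCT} to $\langle\tau\rangle$ forces $\tau$ to act trivially on $\Irr(G)$ as well. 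Consequently $H$ acts on both $X$ and $Y$ with no global fixed point (a global fixed point would be rational), and the goal reduces to proving $c := |Y| - 4 = 0$.

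Second, I would record the key numerical identity. Given $h \in H$, choose $\sigma \in \mathcal G$ with $\varphi(\sigma) = h$; as both actions factor through $H$, \Cref{BLCT} applied to $\langle\sigma\rangle$ equates its fixed-point counts on $\cl(G)$ and $\Irr(G)$. Subtracting the equal contributions of the rational classes and characters gives $\mathrm{Fix}_Y(h) = \mathrm{Fix}_X(h) + c$ for every $h \in H$. Averaging over $H$ (Cauchy--Frobenius), the number of $H$-orbits on $Y$ equals the number on $X$ plus $c$. Moreover, each admissible $H$ contains a fixed-point-free element $h_0$, and for it $c = \mathrm{Fix}_Y(h_0) \ge 0$.

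Third comes the case analysis. The abelian subgroups of $S_4$ with no global fixed point on four points are: $C_2$ and $C_4$ generated by a fixed-point-free permutation, the non-normal $C_2\times C_2 = \langle (12),(34)\rangle$, and the normal Klein four-group $K$. If $H$ is intransitive on $X$ (the $C_2$ and non-normal $C_2\times C_2$ cases) it has two orbits there, so $Y$ has $2 + c$ orbits; since these all have size $\ge 2$ we get $2 + c \le |Y|/2 = 2 + c/2$, forcing $c \le 0$ and hence $c = 0$. If $H$ is cyclic and transitive, i.e.\ $H = C_4 = \langle h\rangle$, then any $y \in Y$ fixed by the generator $h$ is fixed by all of $H$, so $\mathrm{Fix}_Y(h) = 0 = \mathrm{Fix}_X(h)$ and again $c = 0$.

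The remaining possibility, $H$ transitive and non-cyclic, is exactly $H = K$, and this is where the argument genuinely stalls: the orbit inequality only yields $0 \le c \le 2$, and with no single generator the cyclic trick is unavailable. This is the main obstacle, and, rather than a defect of the method, it reflects a real phenomenon — the Klein four-group image permits $c = 2$ — which is precisely why it must be excluded from the statement. In the write-up I would isolate Steps 1--2 as general facts and then dispatch the four subgroups, the only delicate point being the transitive split, which hinges on cyclicity of the image.
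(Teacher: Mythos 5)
Your proof is correct, and while it shares the paper's overall skeleton (reduce to an abelian, fixed-point-free subgroup $H\le S_4$, with \Cref{BLCT} as the engine), it handles the decisive case differently. The paper disposes of the cyclic images exactly as you do, but for the non-normal Klein image $\langle(12),(34)\rangle$ it runs an explicit element chase: it names the four characters moved by the fixed-point-free element $\sigma\tau$, supposes a fifth irrational character exists, and derives a contradiction by tracking where $\sigma$ and $\tau$ can send it. You instead prove the uniform identity $\mathrm{Fix}_Y(h)=\mathrm{Fix}_X(h)+c$ with $c=|Y|-4$ (your one-line justification ``subtracting the equal contributions of the rational classes and characters'' is slightly off, since $|\cl_\Q(G)|$ and $|\Irr_\Q(G)|$ themselves differ by $c$; the identity follows cleanly by comparing the BLCT fixed-point equality for $\sigma$ with the trivial one for $h=1$), then apply Cauchy--Frobenius and the bound $\#\mathrm{orbits}(Y)\le|Y|/2$ to force $c\le 0$ in every intransitive case, with $c\ge 0$ coming from a fixed-point-free element. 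This buys uniformity --- the $C_2$, $C_4$ and non-normal Klein cases all fall to one counting argument, and the computation transparently locates the obstruction at the normal Klein group, where it only gives $0\le c\le 2$ --- at the cost of being less explicit about which characters realize the orbits, information the paper's chase makes visible and which foreshadows the harder analysis it must later do for $K$ in the proof of Theorem A.
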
 

\begin{proof}
	Since $\Gal(\Q_n{:}\Q)$ is abelian, the image of the action is an abelian subgroup of $S_4$. This narrows the possibilities to either a cyclic group, or one of the two copies of $C_2 \times C_2$ up to conjugation: one that contains a $2$-cycle and the other that does not. In the first case, the result follows from \Cref{BLCT}. Thus, without loss of generality, all that is left is to deal with the case in which the image of $\varphi$ is $\{1, (12), (34), (12)(34)\}$.
	
	Let $\sigma$ be in the preimage of $(12)$ and $\tau$, of $(34)$. Then, $\sigma \tau$ acts as $(12)(34)$. \Cref{BLCT} gives us exactly $4$ irreducible characters moved by $\sigma \tau$, none of which can be rational; call them $\chi_1, \chi_2, \chi_3, \chi_4$.
	
	Suppose there is some fifth irrational character $\chi_5$ not on that list. Then, some element of $\Gal(\Q_n{:}\Q)$ acts non-trivially upon it. Suppose, without loss of generality, that it is $\sigma$. If $\sigma(\chi_5) = \chi_6$ for some sixth irrational character $\chi_6$, then $\tau$ would permute $\{\chi_i \mid 1 \leq i \leq 6\}$ without fixed points, by construction, contradicting \Cref{BLCT}.
	
	Then, we can assume without loss of generality $\sigma(\chi_5) = \chi_1$. Since $(\tau \sigma)(\chi_1) = \chi_2$, this forces $\tau(\chi_5) = \chi_2$. \Cref{BLCT} applies again with the subgroup induced from $\sigma$, meaning it can only act as a transposition on these characters (i.e., $\sigma(\chi_2) = \chi_2$).  Thus, $(\sigma \tau)(\chi_5) = \chi_2$ and $(\tau \sigma)(\chi_5) = \chi_1$, a contradiction.
\end{proof}

We may now go on to prove the main result. To that end, note that the cases where $G$ has exactly two or exactly three irrational conjugacy classes follow immediately from \Cref{BLCT}, since $\Gal(\Q_n{:}\Q)$ acts cyclically in both cases. 

Furthermore, in the case where $G$ has exactly five irrational conjugacy classes, the Galois group acts as an abelian subgroup of $S_5$ which cannot be identified with a subgroup of $S_4$. By this, we mean that, for all $i$ in $\{1, 2, 3, 4, 5\}$, there must be some permutation in the Galois group that moves $i$, since all five correspond to irrational classes. A simple inspection (for instance, in the \texttt{GAP} computer system, \cite{GAP4}) shows that the only abelian subgroups of $S_5$ with this property are cyclic, meaning \Cref{BLCT} applies again.

With the previous observations in mind, we follow the proof below with the assumption that $G$ has exactly four irrational conjugacy classes. We write $x \sim y$ when the elements $x, y$ are conjugate in $G$, and begin with two computational lemmas which will be useful later.

\begin{lemma}
	\label{ColumnAnalysis}
	Let $G$ be a finite group and let $\sigma \in \Gal(\Q_n{:}\Q)$, where $n = \exp(G)$, be a Galois automorphism. Suppose $\sigma$ leaves all but $4$ irreducible characters fixed and has two orbits of size $2$ in $\Irr(G)$. Then, if $X = \{\chi_1, \sigma \chi_1, \chi_2, \sigma \chi_2\}$ are the characters not fixed by $\sigma$ and $x \in G$ is a representative of a conjugacy class not fixed by $\sigma$, we have
	\begin{equation*}
		|C_G(x)| = |\chi_1(x) - \sigma\chi_1(x)|^2 + |\chi_2(x) - \sigma\chi_2(x)|^2.
	\end{equation*}
\end{lemma}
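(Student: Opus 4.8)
The plan is to exploit the second orthogonality relation (column orthogonality) on the character table of $G$, applied to the element $x$. Recall that column orthogonality gives $\sum_{\chi \in \Irr(G)} |\chi(x)|^2 = |C_G(x)|$. The strategy is to pair each column with the column indexed by a suitable Galois conjugate of $x$ and take a difference, so that all the characters fixed by $\sigma$ cancel out, leaving only contributions from the four moved characters in $X$.

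First I would set up the relevant Galois data. Since $\sigma \in \Gal(\Q_n{:}\Q)$ with $n = \exp(G)$, there is an associated integer $r = r_\sigma$, coprime to $n$, such that $\sigma$ acts on classes via $\sigma \cdot (y^G) = (y^r)^G$, and with the compatibility $(\sigma\chi)(y) = \chi(\sigma \cdot y)$ from the preliminaries. By \Cref{BLCT} applied to the cyclic group $\langle \sigma \rangle$, the permutation action of $\sigma$ on $\Irr(G)$ is permutation isomorphic to its action on $\cl(G)$; since $\sigma$ has exactly two $2$-cycles and otherwise fixes characters, it must likewise have exactly two $2$-cycles on classes and fix the rest. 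In particular $x$, being in a non-fixed class, satisfies $x \not\sim \sigma \cdot x = x^r$, while $\sigma^2$ fixes the class of $x$, so $x \sim x^{r^2}$.

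Next I would compute the two columns at $x$ and at $\sigma \cdot x = x^r$. For any $\chi$ one has $\chi(x^r) = \chi(\sigma \cdot x) = (\sigma\chi)(x)$. Hence I consider the quantity $|C_G(x)| - \operatorname{Re}\!\big(\text{cross term}\big)$, or more cleanly the sum $\sum_{\chi} |\chi(x) - \chi(x^r)|^2 = \sum_\chi |\chi(x) - (\sigma\chi)(x)|^2$. For every character fixed by $\sigma$, the summand vanishes, so only the four characters in $X$ survive. For the pair $\chi_1, \sigma\chi_1$, note that $(\sigma\chi_1)(x) - (\sigma(\sigma\chi_1))(x) = (\sigma\chi_1)(x) - \chi_1(x)$ because $\sigma$ swaps $\chi_1$ and $\sigma\chi_1$ (as $\sigma$ acts as a $2$-cycle, $\sigma^2\chi_1 = \chi_1$); thus the summand for $\sigma\chi_1$ equals that for $\chi_1$, and similarly for the second orbit. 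This yields $\sum_\chi |\chi(x) - (\sigma\chi)(x)|^2 = 2|\chi_1(x) - \sigma\chi_1(x)|^2 + 2|\chi_2(x) - \sigma\chi_2(x)|^2$.

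Finally I would relate the left-hand side to centralizer orders. Expanding $\sum_\chi |\chi(x) - \chi(x^r)|^2 = \sum_\chi |\chi(x)|^2 + \sum_\chi |\chi(x^r)|^2 - 2\operatorname{Re}\sum_\chi \chi(x)\overline{\chi(x^r)}$. The first two sums are $|C_G(x)|$ and $|C_G(x^r)|$, which are equal since $x$ and $x^r$ generate the same cyclic group and hence have equal centralizers; each equals $|C_G(x)|$. The cross sum $\sum_\chi \chi(x)\overline{\chi(x^r)}$ is, by column orthogonality, equal to $|C_G(x)|$ if $x \sim x^r$ and $0$ otherwise; since $x \not\sim x^r$ it vanishes. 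Therefore the left-hand side equals $2|C_G(x)|$, and combining with the previous paragraph gives $|C_G(x)| = |\chi_1(x) - \sigma\chi_1(x)|^2 + |\chi_2(x) - \sigma\chi_2(x)|^2$, as claimed. The main obstacle I anticipate is bookkeeping the conjugacy $x \not\sim x^r$ together with $x \sim x^{r^2}$ so that the orthogonality cross-terms behave correctly, and carefully verifying that $\sigma$ acts as a genuine $2$-cycle (not a fixed point) on the class of $x$, which is exactly where \Cref{BLCT} is indispensable.
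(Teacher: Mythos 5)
Your proposal is correct and uses essentially the same idea as the paper: apply the second orthogonality relation to the columns of $x$ and $\sigma\cdot x$, note that every $\sigma$-fixed character contributes nothing to the difference, and expand the surviving terms over $X$ using that $\sigma$ acts as an involution there. The only (cosmetic) difference is that you symmetrize the sum as $\sum_\chi |\chi(x)-\chi(x^r)|^2 = 2|C_G(x)|$ and divide by $2$, whereas the paper computes $\sum_{\chi\in X}\chi(x)\overline{(\chi(x)-\sigma\chi(x))} = |C_G(x)|$ directly.
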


\begin{proof}
	Write $x^a = \sigma \cdot x$, with $a > 1$ and $x^a \not \sim x$. By the second orthogonality relation, we have
	\begin{equation}
		\label{eq1}
		\sum_{\chi \in X} \chi(x)\overline{\chi(x^a)} + \sum_{\chi \not \in X} \chi(x)\overline{\chi(x^a)} = \sum_{\chi \in \Irr(G)} \chi(x)\overline{\chi(x^a)} = 0.
	\end{equation}
	
	Using the second orthogonality relation again, we get:
	\begin{equation*}
		\sum_{\chi \in X} \chi(x)\overline{\chi(x)} + \sum_{\chi \not \in X} \chi(x)\overline{\chi(x)} = \sum_{\chi \in \Irr(G)} \chi(x)\overline{\chi(x)} = |C_G(x)|,
	\end{equation*}
	meaning, then, that
	\begin{equation}
		\label{eq2}
		\sum_{\chi \not \in X} \chi(x)\overline{\chi(x)} = |C_G(x)| - \sum_{\chi \in X} \chi(x)\overline{\chi(x)}.
	\end{equation}
	Also, if $\chi \not \in X$, then it is fixed by $\sigma$; equivalently, $\chi(x^a) = \chi(\sigma \cdot x) = \sigma(\chi(x)) = \chi(x)$ in this case. This allows us to substitute \Cref{eq2} into \Cref{eq1}, yielding
	\begin{equation*}
		\sum_{\chi \in X} \chi(x)\overline{\chi(x^a)} + |C_G(x)| - \sum_{\chi \in X} \chi(x)\overline{\chi(x)} = 0,
	\end{equation*}
	which is equivalent to 
	\begin{equation*}
		\sum_{\chi \in X} \chi(x)\overline{(\chi(x) - \sigma\chi(x))} = |C_G(x)|.
	\end{equation*}
	Furthermore
	\begin{align*}
		\sum_{\chi \in X} \chi(x)\overline{(\chi(x) - \sigma\chi(x))} 
		= |\chi_1(x) - \sigma\chi_1(x)|^2 + |\chi_2(x) - \sigma\chi_2(x)|^2,
	\end{align*}
	by expanding the sum and using the fact that $\sigma^2$ acts as the identity. Putting the last two equations together, we obtain
	\begin{equation*}
		|C_G(x)| = |\chi_1(x) - \sigma\chi_1(x)|^2 + |\chi_2(x) - \sigma\chi_2(x)|^2,
	\end{equation*}
	which yields the result.
\end{proof}

\begin{lemma}
	\label{RowAnalysis}
	Let $G$ be a finite group, let $n = \exp(G)$ and suppose $\Gal(\Q_n{:}\Q)$ acts on $\cl(G)$ as the Klein group $K \unlhd S_4$. Then, if $\mathscr{D} = \{x, x^a, x^b, x^{ab}\}$ is a complete set of representatives of the irrational conjugacy classes moved by $K$, $\sigma \in \Gal(\Q_n{:}\Q)$ is such that $\sigma \cdot x = x^a$ and $\chi \in \Irr(G)$ is such that $\chi \neq \sigma\chi$, we have
	\begin{equation*}
		|C_G(x)| = |\chi(x) - \chi(x^a)|^2 + |\chi(x^b) - \chi(x^{ab})|^2.
	\end{equation*}
\end{lemma}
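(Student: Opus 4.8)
The plan is to mirror the argument of \Cref{ColumnAnalysis}, but to replace the column (second) orthogonality relation by the row (first) orthogonality relation, summing over conjugacy classes rather than over characters. The first thing I would do is pin down how $\sigma$ permutes the four classes in $\mathscr{D}$. Since $\Gal(\Q_n{:}\Q)$ acts on $\cl(G)$ as the normal Klein four-group $K \unlhd S_4$, and $K$ acts on $\mathscr{D}$ as its regular representation, every non-identity element of $K$ is a fixed-point-free involution. The image of $\sigma$ is the element of $K$ sending $x$ to $x^a$, so it must act as the double transposition $(x\ x^a)(x^b\ x^{ab})$. Using the identity $(\sigma\chi)(y) = \chi(\sigma\cdot y)$, this gives $(\sigma\chi)(x) = \chi(x^a)$, $(\sigma\chi)(x^a) = \chi(x)$, $(\sigma\chi)(x^b) = \chi(x^{ab})$ and $(\sigma\chi)(x^{ab}) = \chi(x^b)$. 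I would also record that the four classes share a common centralizer order: each of $x^a$, $x^b$, $x^{ab}$ is a power of $x$ coprime to $o(x)$, hence generates $\langle x\rangle$, so $C_G(x) = C_G(x^a) = C_G(x^b) = C_G(x^{ab})$; write $c$ for this common value.

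Next I would apply the first orthogonality relation in its class-sum form, $\sum_{K} |C_G(x_K)|^{-1}\,\chi(x_K)\overline{\psi(x_K)} = \langle\chi,\psi\rangle$, once with $\psi = \chi$ (giving the value $1$) and once with $\psi = \sigma\chi$ (giving the value $0$, since $\sigma\chi$ is an irreducible character distinct from $\chi$). Subtracting the second from the first yields $\sum_K |C_G(x_K)|^{-1}\,\chi(x_K)\overline{(\chi(x_K) - (\sigma\chi)(x_K))} = 1$. For every class $K$ not in $\mathscr{D}$ the automorphism $\sigma$ fixes $K$ (these are precisely the rational classes, which are fixed by the whole Galois group), so $(\sigma\chi)(x_K) = \chi(\sigma\cdot x_K) = \chi(x_K)$ and the corresponding summand vanishes. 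Only the four classes of $\mathscr{D}$ survive, all with centralizer order $c$, so $c = \sum_{y \in \mathscr{D}} \chi(y)\overline{(\chi(y) - (\sigma\chi)(y))}$.

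Finally I would evaluate this four-term sum using the permutation data. Grouping the pair $\{x, x^a\}$ and the pair $\{x^b, x^{ab}\}$, and setting $u = \chi(x)$, $v = \chi(x^a)$, the first two terms collapse via $u\overline{(u - v)} + v\overline{(v - u)} = |u - v|^2$, and the analogous computation applies to the second pair; this gives $c = |\chi(x) - \chi(x^a)|^2 + |\chi(x^b) - \chi(x^{ab})|^2$, as claimed. The only genuinely delicate point is the very first step—correctly identifying $\sigma$ as the fixed-point-free involution $(x\ x^a)(x^b\ x^{ab})$ and determining exactly which classes it fixes—after which the argument is a routine rearrangement of the two orthogonality identities.
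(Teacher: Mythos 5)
Your proposal is correct and follows essentially the same route as the paper: two applications of the first orthogonality relation (with $\psi=\chi$ and $\psi=\sigma\chi$), subtraction, elimination of the classes outside $\mathscr{D}$ because they are $\sigma$-fixed, use of the common centralizer order on $\mathscr{D}$, and pairwise expansion of the remaining four-term sum. If anything, you are slightly more explicit than the paper in justifying that $\sigma$ acts on $\mathscr{D}$ as the fixed-point-free involution $(x\ x^a)(x^b\ x^{ab})$ via the regular action of $K$, a step the paper leaves implicit when it ``opens the sum explicitly.''
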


\begin{proof}
	Let $\mathscr{C}$ be a complete set of representatives of the conjugacy classes of $G$ containing the subset $\mathscr{D}$. By the first orthogonality relation, since $\sigma\chi \neq \chi$, we get
	\begin{equation}
		\label{eq4}
		\sum_{y \in \mathscr{D}} |y^G| \chi(y)\overline{\sigma\chi(y)} + \sum_{y \in \mathscr{C} \setminus \mathscr{D}} |y^G| \chi(y)\overline{\sigma\chi(y)} =  \sum_{y \in \mathscr{C}} |y^G|\chi(y)\overline{\sigma\chi(y)} = 0.
	\end{equation}
	
	We may also use the first orthogonality relation again, to get
	\begin{equation*}
		\sum_{y \in \mathscr{D}} |y^G| \chi(y)\overline{\chi(y)} + \sum_{y \in \mathscr{C} \setminus \mathscr{D}} |y^G| \chi(y)\overline{\chi(y)} = \sum_{y \in \mathscr{C}} |y^G| \chi(y)\overline{\chi(y)} = |G|,
	\end{equation*}
	meaning that
	\begin{equation}
		\label{eq5}
		\sum_{y \in \mathscr{C} \setminus \mathscr{D}} |y^G| \chi(y)\overline{\chi(y)} = |G| - \sum_{y \in \mathscr{D}} |y^G| \chi(y)\overline{\chi(y)}.
	\end{equation}
	
	If $y \in \mathscr{C} \setminus \mathscr{D}$, its conjugacy class is fixed by $K$, meaning $\sigma\chi(y) = \chi(\sigma \cdot y) = \chi(y)$. We may thus replace \Cref{eq5} on \Cref{eq4}, yielding
	\begin{equation*}
		\sum_{y \in \mathscr{D}} |y^G| \chi(y)\overline{\sigma\chi(y)} + |G| - \sum_{y \in \mathscr{D}} |y^G| \chi(y)\overline{\chi(y)} = 0,
	\end{equation*}
	or, equivalently,
	\begin{equation*}
		\sum_{y \in \mathscr{D}} |y^G| \chi(y)\overline{(\chi(y) - \sigma \chi(y))} = |G|.
	\end{equation*}
	Also, all members of $\mathscr{D}$ have centralizers of the same size, as they are generators of the same cyclic group. In particular, $|y^G| = |x^G|$ for all $y \in \mathscr{D}$, meaning the above equation can be rewritten as
	\begin{equation*}
		\sum_{y \in \mathscr{D}} \chi(y)\overline{(\chi(y) - \sigma \chi(y))} = |C_G(x)|.
	\end{equation*}
	We can open the sum explicitly, in order to obtain
	\begin{align*}
		\sum_{y \in \mathscr{D}} \chi(y)\overline{(\chi(y) - \sigma \chi(y))} 
		= |\chi(x) - \chi(x^a)|^2 + |\chi(x^b) - \chi(x^{ab})|^2.
	\end{align*}
	Therefore, we get
	\begin{equation*}
		|C_G(x)| = |\chi(x) - \chi(x^a)|^2 + |\chi(x^b) - \chi(x^{ab})|^2
	\end{equation*}
	as desired.
\end{proof}

\begin{proof}[Proof of \Cref{TheoremA}]
	Write $|G| = n$. By the Lemma, we may assume the Galois group $\Gal(\Q_n{:}\Q)$ acts as $\{1, (12)(34), (13)(24), (14)(23)\}$, and we can take $\sigma$ and $\tau$ that act as $(12)(34)$ and $(13)(24)$, respectively. Hence, if $x$ is a representative of an irrational conjugacy class of $G$, the other classes can be written as $\sigma \cdot x$, $\tau \cdot x$ and $\sigma \tau \cdot x$. Equivalently, if $o(x) = m$, there are natural numbers $a, b$ such that $(m, a) = 1 = (m, b)$ and the four irrational classes are represented by $x, x^a, x^b, x^{ab}$, with $x^{a^2} \sim x$ and $x^{b^2} \sim x$, where $\sigma \cdot x = x^a$ and $\tau \cdot x = x^b$.
	
	By \Cref{ColumnAnalysis}, we know
	\begin{equation*}
		|C_G(x)| = |\chi_1(x) - \sigma\chi_1(x)|^2 + |\chi_2(x) - \sigma\chi_2(x)|^2.
	\end{equation*}
	Also, by \Cref{RowAnalysis}, we know
	\begin{equation*}
		|C_G(x)| = |\chi_1(x) - \chi_1(x^a)|^2 + |\chi_1(x^b) - \chi_1(x^{ab})|^2.
	\end{equation*}
	Using that $\chi_1(x^a) = \sigma\chi(x)$, and the analogous equality for $\tau$, we may combine both equations to obtain
	\begin{equation}
		\label{main}
		|\tau\chi_1(x) - \sigma\tau\chi_1(x)| = |\chi_2(x) - \sigma\chi_2(x)|.
	\end{equation}
	
	Suppose, now, that $\tau$ fixes $\chi_1$. If that is the case, using \Cref{BLCT}, there must be two orbits of size two induced by $\tau$ on the characters of $G$. If $\chi_2$ wasn't in any such orbit, $\sigma\tau$ would act non-trivially on more than four characters, which is impossible, by the same lemma. Thus $\tau \chi_2 \neq \chi_2$. 
	
	If $\sigma \tau \chi_2 \neq \chi_2$, then the characters $\{\chi_2, \sigma \chi_2, \tau\chi_2, \sigma \tau \chi_2\}$ are all distinct and account for two orbits of size $2$ for $\sigma$. Then, either $\chi_1 = \tau \chi_2$ or $\chi_1 = \sigma \tau \chi_2$, neither of which is fixed by $\tau$, contradicting our assumption.
	
	So, with the assumption that $\tau$ fixes $\chi_1$, we have $\tau \chi_2 = \sigma \chi_2$ and there is an irrational irreducible character $\chi_3$ such that $Y = \{\chi_2, \tau\chi_2, \chi_3, \tau \chi_3\}$ accounts for the two orbits of size $2$ of $\tau$ in the (irrational) irreducible characters of $G$. 
	
	By \Cref{ColumnAnalysis}, using $\tau$ instead of $\sigma$, we obtain
	\begin{equation*}
		|C_G(x)| = |\chi_2(x) - \tau\chi_2(x)|^2 + |\chi_3(x) - \tau\chi_3(x)|^2.
	\end{equation*}
	Also, by \Cref{RowAnalysis}, using $\chi_2$ and $\tau$ instead of $\chi_1$ and $\sigma$, we get
	\begin{equation*}
		|C_G(x)| = |\chi_2(x) - \chi_2(x^b)|^2 + |\chi_2(x^a) - \chi_2(x^{ab})|^2.
	\end{equation*}
	
	We now use $\tau \chi_2(x) = \chi_2(x^b)$ to put them together and obtain
	\begin{equation*}
		|\chi_2(x) - \sigma\chi_2(x)| = |\chi_3(x) - \tau\chi_3(x)|,
	\end{equation*}
	by using that $\sigma \chi_2 = \tau \chi_2$. Combining this with \Cref{main}, we get the simultaneous equalities
	\begin{equation}
		\label{eqMain}
		|\chi_1(x) - \sigma\chi_1(x)| = |\chi_2(x) - \sigma\chi_2(x)| = |\chi_3(x) - \tau\chi_3(x)|.
	\end{equation}
	
	Now, notice that both $\alpha = \chi_1(x) + \sigma\chi_1(x)$ and $\beta = \chi_1(x)\sigma\chi_1(x)$ are rational numbers, since they are fixed by all Galois automorphisms. Thus, $\chi_1(x)$ and $\sigma\chi_1(x)$ are the roots of the polynomial $t^2 - \alpha t + \beta$, meaning $|\chi_1(x) - \sigma\chi_1(x)| = |\sqrt{\Delta}|$, where $\Delta$ is the (non-zero) discriminant of the polynomial. Analogously (as $\sigma\tau$ fixes $\chi_2$ and $\sigma$ fixes $\chi_3$), we get some non-zero rational numbers $\Delta', \Delta''$ such that $|\chi_2(x) - \sigma\chi_2(x)| = |\sqrt{\Delta'}|$ and $|\chi_3(x) - \tau\chi_3(x)| = |\sqrt{\Delta''}|$.
	
	By the Pigeonhole Principle, at least two such numbers, say, $\Delta$ and $\Delta'$, have the same sign (i.e., they are either both positive or both negative). In that case, since by \Cref{eqMain}, they both have the same norm, it is straightforward that $\sqrt{\Delta} = \sqrt{\Delta'}$. However, $\chi_2(x) = \gamma + \delta \sqrt{\Delta'}$ for some $\gamma, \delta \in \Q$. Thus, $\chi_2(x) = \gamma + \delta \sqrt{\Delta}$ and then $\tau\chi_2(x) = \gamma + \delta \tau(\sqrt{\Delta}) = \chi_2(x)$, as $\tau$ fixes $\chi_1(x)$ and, therefore, it also fixes $\sqrt{\Delta}$. This would mean $\sigma\chi_2(x) = \chi_2(x)$, implying $\chi_2(x)$ is rational, which is contrary to hypothesis.
	
	Ergo, we cannot have $\tau \chi_1 = \chi_1$. The exact same logic with $\sigma \tau$ instead of $\tau$ implies we also cannot have $\sigma \tau \chi_1 = \chi_1$. Then, the characters $\{\chi_1, \sigma \chi_1, \tau\chi_1, \sigma \tau \chi_1\}$ are all distinct and account for two orbits of size $2$ for each of the Galois automorphisms. By \Cref{BLCT}, this finishes the proof.
\end{proof}

The analogous result for irrational characters instead of classes is true when $G$ has exactly two, three or five irrational characters, since these three cases amount to \Cref{BLCT}, as mentioned before. However, when there are exactly four irrational characters, the result is false in general. One counter-example is ``\texttt{SmallGroup(32, 42)}'' in the \texttt{GAP} ``\texttt{SmallGroups}'' library. It has $4$ irrational characters, but $6$ irrational conjugacy classes, and thus it also shows that an extension of \Cref{TheoremA} for six or more irrational conjugacy classes would be false.

This is analogous, as mentioned in the Introduction, to how an extension of the Navarro-Tiep conjecture for four rational classes would also fail to hold, as shown by ``\texttt{SmallGroup(32, 15)}'', which contains exactly $4$ rational conjugacy classes but $6$ rational characters. We cannot help but notice how these were, respectively, the number of \emph{irrational characters} and \emph{irrational conjugacy classes} in \texttt{SmallGroup(32, 42)}, illustrating the apparent duality mentioned in the Introduction. 

So, we now know that having up to $5$ irrational classes implies $|\cl_{\Q}(G)| = |\Irr_{\Q}(G)|$, and that the same result is false when we consider either ``irrational characters'' or ``rational classes'' instead. If the apparent duality were to have some reason for being, one would expect the following question to have an affirmative answer:

\begin{question}
	\label{Navarro-Tiep-extension}
	Let $G$ be a group with at most $5$ rational irreducible characters. Then, is it true that $|\cl_{\Q}(G)| = |\Irr_{\Q}(G)|$?
\end{question}

Checking all groups in the \texttt{GAP} ``\texttt{SmallGroups}'' library of order up to $1000$, we have come short of a counterexample. In \cite{Rossi:Rational}, the author cites the group \texttt{SmallGroup(672, 128)} as having $4$ rational irreducible characters and $6$ rational conjugacy classes. However, upon examination, we find that the numbers of rational characters and conjugacy classes were unfortunately switched around; if $G = \texttt{SmallGroup(672, 128)}$, then $|\Irr_{\Q}(G)| = 6$ and $|\cl_{\Q}(G)| = 4$, invalidating this group as a counterexample. At least in the restricted case of $2$-groups, there can be none, as we now show.

\begin{theorem}
	\Cref{Navarro-Tiep-extension} is true for $2$-groups.
\end{theorem}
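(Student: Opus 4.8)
The plan is to combine a permutation‑counting reduction with the structural restriction that few rational characters force on a $2$‑group. Throughout let $n=\exp(G)=2^k$ and $\mathcal{G}=\Gal(\Q_n{:}\Q)$. If $k\le 2$ then $\mathcal{G}$ is cyclic, and by \Cref{BLCT} the actions of $\mathcal{G}$ on $\Irr(G)$ and $\cl(G)$ are permutation isomorphic, whence $|\Irr_{\Q}(G)|=|\cl_{\Q}(G)|$. So I would assume $k\ge 3$, in which case $\mathcal{G}\cong C_2\times C_{2^{k-2}}$, a non‑cyclic group; this is where the real difficulty lies.

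First I would upgrade \Cref{BLCT} to the whole group. Applying it to each cyclic subgroup $\langle\sigma\rangle$ shows that every $\sigma\in\mathcal{G}$ fixes as many irreducible characters as conjugacy classes, so the permutation characters of $\mathcal{G}$ on $\Irr(G)$ and on $\cl(G)$ coincide. As $\mathcal{G}$ is abelian, pairing this common permutation character with the linear characters of $\mathcal{G}$ shows that, for every subgroup $\mathcal{T}\le\mathcal{G}$ with $\mathcal{G}/\mathcal{T}$ cyclic, the number of $\mathcal{G}$‑orbits whose stabiliser lies in $\mathcal{T}$ is the same on $\Irr(G)$ and on $\cl(G)$. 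Möbius inversion over the subgroup lattice of $\mathcal{G}$ then writes $|\Irr_{\Q}(G)|$ (the number of singleton orbits) in terms of these orbit counts; the nonzero Möbius terms correspond to elementary abelian quotients $1,\,C_2,\,C_2\times C_2$, and every one of them has cyclic quotient except the single term coming from the Frattini subgroup $\Phi=\Phi(\mathcal{G})$, the unique subgroup with $\mathcal{G}/\Phi\cong C_2\times C_2$. Since an orbit has stabiliser inside $\Phi$ exactly when the relevant field of values contains the fixed field $\Q_8=\Q(\zeta_8)$ of $\Phi$, writing $c_{\Irr}$ and $c_{\cl}$ for the number of Galois orbits of irreducible characters, respectively classes, whose field of values contains $\Q_8$, the inversion collapses to the master identity $|\Irr_{\Q}(G)|-|\cl_{\Q}(G)|=2\,(c_{\Irr}-c_{\cl})$. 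In particular the two counts have equal parity, and it suffices to prove $c_{\Irr}=c_{\cl}$.

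Next I would extract the structural consequence of the hypothesis. The rational linear characters of $G$ are precisely the $\{\pm1\}$‑valued homomorphisms, so their number is $|\mathrm{Hom}(G,\{\pm1\})|=2^{d}$, where $d$ is the minimal number of generators of $G$. Hence $|\Irr_{\Q}(G)|\le 5$ forces $2^{d}\le 5$, i.e. $d\le 2$. If $d\le 1$ then $G$ is cyclic, hence abelian, and the $\mathcal{G}$‑actions on $\Irr(G)$ and $\cl(G)$ are literally isomorphic (both identify with $G$), giving $c_{\Irr}=c_{\cl}$ immediately. The remaining case is $d=2$: a $2$‑generated $2$‑group with $|\Irr_{\Q}(G)|\in\{4,5\}$, that is, with \emph{at most one} rational non‑linear irreducible character.

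The main obstacle is exactly this last case: proving $c_{\Irr}=c_{\cl}$ for a $2$‑generated non‑abelian $2$‑group having at most one rational non‑linear character. The bound is essential and the difficulty is genuine: \Cref{BLCT} fails for the non‑cyclic section $\mathcal{G}/\Phi$, so $c_{\Irr}$ and $c_{\cl}$ are not forced equal by character‑table combinatorics alone, and indeed \texttt{SmallGroup(32,15)} is $2$‑generated with \emph{two} rational non‑linear characters and has $c_{\Irr}=1\ne 0=c_{\cl}$. My plan here is an induction on $|G|$: choosing a central subgroup $Z$ of order $2$, note that $|\Irr_{\Q}(G/Z)|\le|\Irr_{\Q}(G)|\le 5$ by inflation (\Cref{basicLemma}(b)), so the inductive hypothesis gives the matching $c_{\Irr}=c_{\cl}$ for $G/Z$; one then only has to control the characters and classes nontrivial on $Z$, where the $2$‑generated constraint (exactly three maximal subgroups, all of index $2$) makes Clifford theory along a maximal subgroup effective, with the abelian case as the baseline where the two actions coincide. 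I expect the delicate point to be reconciling the class side with the character side through the central quotient, since the correspondence between $\cl(G)$ and $\cl(G/Z)$ is not a bijection; this is where the assumption of at most one rational non‑linear character must be used to prevent the non‑abelian ``defect'' $c_{\Irr}-c_{\cl}$ from being nonzero, and where a short computer‑assisted check of the finitely many small base configurations (in the spirit of the \texttt{GAP} verifications used elsewhere in the paper) is likely to be needed.
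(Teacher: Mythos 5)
Your reduction steps are correct and genuinely different from the paper's, but the proof has a real gap at its core: the case you yourself identify as ``the main obstacle'' is never actually proved. The master identity $|\Irr_{\Q}(G)|-|\cl_{\Q}(G)|=2(c_{\Irr}-c_{\cl})$ is a nice observation (the M\"obius computation over the subgroup lattice of $C_2\times C_{2^{k-2}}$ is right, and the only non-cyclic quotient contributing is $\mathcal{G}/\Phi(\mathcal{G})\cong C_2\times C_2$ with coefficient $2$), and the bound $2^{d}\le|\Irr_{\Q}(G)|\le 5$ forcing $d(G)\le 2$ is correct. But for the remaining case --- a $2$-generated $2$-group with at most one rational non-linear character --- you only offer a plan: induct through a central $Z$ of order $2$ and ``control'' the classes and characters via Clifford theory, conceding that the class side is problematic and that computer checks of base configurations would be needed. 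That plan does not obviously close: the quantity $c_{\cl}$ counts Galois orbits of classes whose field of values contains $\Q_8$, and there is no given mechanism relating $c_{\cl}(G)$ to $c_{\cl}(G/Z)$, since classes of $G$ lying over a fixed class of $G/Z$ can fuse or stay separate and their fields of values can strictly grow; moreover your own example \texttt{SmallGroup(32,15)} shows the defect $c_{\Irr}-c_{\cl}$ is genuinely nonzero for some $2$-generated $2$-groups, so the hypothesis ``at most one rational non-linear character'' must enter in an essential, and currently unspecified, way. As written, the argument is a reduction plus a conjecture, not a proof.

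For comparison, the paper avoids this combinatorial bottleneck entirely by importing structural classifications: it disposes of $|\Irr_{\Q}(G)|=2$ via Navarro--Tiep, observes that $3$ is impossible for $2$-groups, and then uses the Isaacs--Navarro--Sangroniz classification of $2$-groups with exactly $4$ (product of two cyclic groups with $|X\cap Z(G)|\ge 4$, handled by Sangroniz--Tent's count of rational classes) or exactly $5$ (maximal class, handled by listing the conjugacy classes of dihedral, semidihedral and generalized quaternion groups explicitly) rational irreducible characters. If you want to salvage your route, the most promising fix is to splice in those same classification theorems at the point where you currently invoke induction: they identify exactly which $2$-generated $2$-groups can occur, and for those the equality $c_{\Irr}=c_{\cl}$ (equivalently $|\Irr_{\Q}(G)|=|\cl_{\Q}(G)|$) can be verified directly.
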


\begin{proof}
	First of all, the main result in \cite{NavTiep:Rational} proves the case for $2$ rational irreducible characters for all groups, as was mentioned in the Introduction. Also, notice that a $2$-group cannot have exactly $3$ rational irreducible characters. Indeed, a cyclic $2$-group has exactly $2$ rational irreducible characters and, for noncyclic groups, $[G:\Phi(G)] \geq 4$, which in itself induces $4$ rational irreducible characters. 
	
	For the case in which $G$ has exactly $4$ rational irreducible characters, \cite[Theorem 4.4]{IsaacsNavarro:p-groups} shows $G = XY$, with $X, Y$ cyclic, $X \unlhd G$ and $|X \cap Z(G)| \geq 4$. Write $X = \langle a \rangle$ and $Y = \langle b \rangle$. Then, we may take some element $a^l$ of order $4$ in $X \cap Z(G)$. Since $(a^l)^b = a^l$, we get $kl \equiv l \pmod{o(a)}$, meaning $k \equiv 1 \pmod{4}$, as $o(a^l) = 4$. By \cite[Theorem 6]{TentSangroniz:2-groups}, these groups have exactly $4$ rational conjugacy classes.
	
	Finally, if $G$ has exactly $5$ rational irreducible characters, $G$ is of maximal class by one of the main theorems of \cite{IsaacsNavarro:p-groups}. 
	
	If $G$ is dihedral of order $2^{n+1}$, write $G = \langle x, y \mid x^2 = y^{2^n} = 1, yx = xy^{-1} \rangle$. The conjugacy classes of $G$ are known to be 
	\begin{equation*}
		\{1\}, \{y^{2^{n-1}}\}, \{x, xy^2, ..., xy^{2^n-2}\}, \{xy, xy^3, ..., xy^{2^n-1}\} \text{ and } \{y^i, y^{-i}\}, \text{ for } 1 \leq i < 2^{n-1}.
	\end{equation*} 
	From this, it is clear than the first four are rational, as is $\{y^{2^{n-2}}, y^{-2^{n-2}}\}$, and that all other classes are irrational, by order considerations.
	
	If $G$ is semidihedral of order $2^{n+1}$, write $G = \langle x, y \mid x^2 = y^{2^n} = 1, yx = xy^{2^{n-1}-1} \rangle$. Its conjugacy classes are known to be 
	\begin{equation*}
		\{1\}, \{y^{2^{n-1}}\}, \{x, xy^2, ..., xy^{2^n-2}\}, \{xy, xy^3, ..., xy^{2^n-1}\} \text{ and } \{y^i, y^{i(2^{n-1}-1)}\},
	\end{equation*} 
	where, $i \neq 2^{n-1}$ (of course, some classes are repeated in this last list). In particular, the first four are rational and so is $\{y^{2^{n-2}}, y^{-2^{n-2}}\}$; by order considerations, those are all of them.
	
	Finally, if $G$ is generalized quaternion of order $2^{n+1}$, write $G = \langle x, y \mid x^2 = y^{2^{n-1}}, y^{2^n} = 1, yx = xy^{-1} \rangle$. Just like in the dihedral case, the conjugacy classes of $G$ are known to be
	\begin{equation*}
		\{1\}, \{y^{2^{n-1}}\}, \{x, xy^2, ..., xy^{2^n-2}\}, \{xy, xy^3, ..., xy^{2^n-1}\} \text{ and } \{y^i, y^{-i}\}, \text{ for } 1 \leq i < 2^{n-1}.
	\end{equation*} 
	Looking at this list, we once again get exactly five rational classes, which finishes the proof.
\end{proof}

\section{Bounding prime divisors}

In this section, we are going to work towards a proof of Theorems \ref{TheoremB} and \ref{TheoremC}. In order to do so, we will be using much material from \cite{FariasESoares:Primes}, so it is useful to recall some of it here. First, a fairly technical definition which will be frequently used.

\begin{definition}
	Suppose a group $G$ acts on a finite-dimensional vector space $V$ over a finite field $F$. The action is said to have the \textbf{$k$-eigenvalue property} for some $k \in \mathbb{N}$, if $k$ divides $|F^\times|$ and for every $v \in V$, there exists some $g \in G$ such that $g \cdot v = \lambda v$, where $\lambda$ is a fixed element of order $k$ in $F^\times$.
\end{definition}

The most important application of this result for what is to come is \cite[Theorem B]{FariasESoares:Primes}, which we partially restate here for convenience.

\begin{theorem}[Farias e Soares]
	\label{FariasESoares}
	Let $G$ be a solvable group acting on a finite-dimensional vector space $V$ over $\mathbb{F}_p$, where $p \nmid |G|$. Then, if the action has the $k$-eigenvalue property and $\varphi$ is the Brauer character of $G$ afforded by $V$, then either $[\Q(\varphi):\Q] \geq p/\sqrt{3}$ or $\Q(\varphi)$ contains a primitive $(k/(k, 4))$-root of unity. In any case, if $q$ is a prime divisor of $k$, either $\Q(\varphi)$ contains a primitive $q$-root of unity or $q = 3$ and $p \leq 7$.
\end{theorem}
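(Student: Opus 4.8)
The plan is to reduce everything to a statement about a genuine complex character and then study the Galois action on its field of values. Since $p \nmid |G|$, the module $V$ is semisimple and every $g \in G$ acts on $V$ semisimply with eigenvalues that are $p'$-roots of unity in $\overline{\mathbb{F}_p}$; fixing an isomorphism between the $p'$-roots of unity of $\overline{\mathbb{F}_p}$ and those of $\mathbb{C}$, the Brauer character $\varphi$ becomes the ordinary character of a complex representation, so $\Q(\varphi)$ is an abelian subfield of $\Q_e$, with $e = \exp(G)$ coprime to $p$. The $k$-eigenvalue property translates cleanly: since $\lambda \in \mathbb{F}_p^\times$ has order $k$ we have $k \mid p-1$, the lift $\omega$ of $\lambda$ is a primitive $k$-th root of unity, and for every $0 \neq v \in V$ there is some $g$ with $gv = \lambda v$; equivalently, the $\lambda$-eigenspaces $V_g(\lambda)$ cover $V \setminus \{0\}$. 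In particular $\omega$ occurs as an eigenvalue of at least one $g \in G$.

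The lever of the argument is a Galois-twisting observation. Fix a prime $q \mid k$ and suppose $\Q(\varphi)$ does not contain a primitive $q$-th root of unity. Since $\Q(\varphi) \subseteq \Q_e$ is abelian, there is $\sigma \in \Gal(\Q_e {:} \Q)$, acting on roots of unity by $\zeta \mapsto \zeta^{r}$, which fixes $\Q(\varphi)$ pointwise while $r \not\equiv 1 \pmod q$. Because $\varphi^\sigma = \varphi$ and complex characters determine representations, the multiset of eigenvalues of each $g$ is invariant under $x \mapsto x^{r}$. Applied to an element $g$ that has $\omega$ as an eigenvalue, the whole orbit $\omega, \omega^{r}, \omega^{r^2}, \dots$ consists of eigenvalues of $g$, pairwise distinct since their $q$-primary parts differ. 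Thus the failure of $\zeta_q \in \Q(\varphi)$ imposes eigenvalue repetition of length $\mathrm{ord}_q(r) \geq 2$. Here the prime $q = 3$ is genuinely exceptional: its orbit has length two, $\{\omega_3, \omega_3^{-1}\}$, and $\omega_3 + \omega_3^{-1} = -1 \in \Q$, so this eigenvalue pattern is the unique one that can collapse to a rational value without $\zeta_3$ entering $\Q(\varphi)$.

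To turn this structure into the quantitative dichotomy I would combine the orbit analysis with the covering property, and this is where the prime $p$ itself — rather than merely $k$ — enters. Each non-scalar $g$ satisfies $\dim V_g(\lambda) \leq \dim V - 1$, so covering $V \setminus \{0\}$ by $\lambda$-eigenspaces forces on the order of $p$ group elements with distinct $\lambda$-eigenspaces. Measuring the non-collapse of the forced eigenvalue orbit against this covering count, with the worst case governed by the quantity $|\omega_3 - \omega_3^{-1}| = \sqrt{3}$, should yield $[\Q(\varphi):\Q] \geq p/\sqrt{3}$ whenever the relevant root of unity fails to appear, the configurations that evade \emph{both} conclusions being exactly those with $q = 3$ and $p \leq 7$. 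I expect this quantitative step — pinning down the constant $1/\sqrt{3}$ and isolating the finite list of exceptions by trading the combinatorics of the eigenvalue orbit against the geometry of the covering — to be the main obstacle, since the preceding reductions are essentially formal.

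The displayed alternative then follows by running the single-prime analysis over all primes dividing $k$: as $(k,4)$ is a power of $2$ and $\zeta_2 = -1 \in \Q$ already, containment of a primitive $(k/(k,4))$-th root of unity is equivalent to containment of $\zeta_q$ for every odd $q \mid k$ together with the correct $2$-part bookkeeping encoded by the factor $(k,4)$. The final sentence is exactly the per-prime record of this analysis, with $q = 3$ and $p \leq 7$ as the sole escape.
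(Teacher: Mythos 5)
The paper does not actually prove this statement: it is presented as a ``partial restatement'' of Theorem B of Farias e Soares, \emph{Big primes and character values for solvable groups}, and the author simply cites that work. So there is no internal proof to compare against; your attempt has to be judged as a proof of the cited result itself, and as such it has a genuine gap --- one you name yourself.

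The entire quantitative content of the theorem (the constant $p/\sqrt{3}$, the alternative involving a primitive $(k/(k,4))$-root of unity, and the exceptional case $q=3$, $p\leq 7$) is left as something that ``should yield'' from trading the eigenvalue-orbit combinatorics against the covering geometry. The reductions you do carry out --- lifting $\varphi$ to an ordinary character since $p\nmid|G|$, the observation that a Galois automorphism fixing $\varphi$ must permute the eigenvalue multiset of each $g$, and the count of roughly $p$ eigenspaces needed to cover $V\setminus\{0\}$ --- are correct and are genuine ingredients of the argument, but they are the formal part. The hard part, which occupies most of Farias e Soares's paper, is converting the covering count into a lower bound on $[\Q(\varphi):\Q]$; this requires controlling character values on the many elements witnessing the eigenvalue property and uses the solvability of $G$ essentially (via the structure of solvable linear groups and Clifford-theoretic reductions), whereas solvability never enters your sketch. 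Two smaller points: you restrict to non-scalar $g$ without disposing of the case where some $g$ acts as the scalar $\lambda$ (easy, but it should be said: then $\varphi(g)=\omega\,\varphi(1)$ forces a primitive $k$-th root of unity into $\Q(\varphi)$ outright); and the final ``in any case'' sentence does not follow formally from the displayed dichotomy alone, since a large value of $[\Q(\varphi):\Q]$ does not by itself produce a root of unity in $\Q(\varphi)$ --- so ``running the single-prime analysis over all primes dividing $k$'' presupposes exactly the analysis you have not done.
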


As $p$ does not divide the order of $G$, the Brauer character in the previous theorem is always a complex character, a fact which will be important later. 

\begin{proof}[Proof of \Cref{TheoremB}]
	We proceed by induction. Where convenient, we will refer to the case where $G$ contains exactly two irrational conjugacy classes as ``Case (a)'' and to that in which there are exactly three irrational conjugacy classes as ``Case (b)''. 
	
	Suppose $G$ contains more than one minimal normal subgroup and let $M_1 \neq M_2$ be two such subgroups. Then, $G$ is naturally embedded in the direct product $G/M_1 \times G/M_2$, and each factor also has at most two (resp. three, in case (b)) irrational classes, as can easily be seen. Thus, the induction hypothesis applied to each one shows $\pi(G) \subset \{2, 3, 5, 7\}$.
	
	Without loss of generality, then, we may assume $G$ contains a unique minimal normal subgroup $V$, which, as $G$ is solvable, is an elementary abelian $p$-group for some prime $p$. By the induction hypothesis, $\pi(G/V) \subset \{2, 3, 5, 7\}$, meaning we can assume $p \neq 2, 3, 5$ or $7$. This means $V$ is a normal Sylow $p$-subgroup of $G$ and we can consider a $p$-complement $H$ in $G$. Then, $H$ acts on $V$ by conjugation. Furthermore, as $V$ is abelian, an $H$-invariant subgroup $W$ of $V$ would be normal in $G$; by the minimality of $V$, this forces $W = 1$ or $W = V$. Hence, the action is irreducible. 
	
	In both cases, (a) and (b), the Galois group $\Gal(\Q_n{:}\Q)$, with $n = \exp(G)$, acts cyclically on the irrational classes, meaning we can pick representatives for them which are coprime powers of a single element $x$. This means, in case (a), the two irrational classes are of the form $x^G, (x^a)^G$ for $a$ coprime with $o(x)$ and $x^{a^2}$ conjugate to $x$ and, in case (b), they are of the form $x^G, (x^b)^G, (x^{b^2})^G$, with $b$ coprime with $o(x)$ and $x^{b^3}$ conjugate to $x$. So, independently of the case we are considering, we separate the situations $x \in V$ and $x \not \in V$.
	
	\begin{itemize}
		\item[] \underline{Situation 1: $x \not \in V$}
		
		As all the irrational elements of $G$ are conjugate to coprime powers of $x$ and $V$ is normal, $V$ only contains rational conjugacy classes of $G$. Write $\mathbb{F}_p^\times = \langle \mu \rangle$. Then, as $\mu$ is coprime with $p$, we have that, given $v \in V$, there exists some $g \in G$ such that $g^{-1}vg = v^\mu$, by rationality. Using that $V$ is abelian and that $G = HV$, we can take the previous $g$ in $H$. Switching $V$ to additive notation, we can translate the previous observations as follows: given $v \in V$, there exists $h \in H$ such that $h \cdot v = \mu v$. This means the action of $H$ on $V$ has the $(p-1)$-eigenvalue property.
		
		Let $\varphi$ be the Brauer character of the $\mathbb{F}_pH$-module $V$, which, as previously mentioned, is a regular character of $H$, and let $K = \Q(\varphi)$. If $\varphi$ is rational, then \Cref{FariasESoares} finishes the proof in either case. We may thus assume that $\varphi$ is not rational. Also, notice that $H \cong G/V$, meaning $H$ has at most two (resp. three) irrational conjugacy classes and the induction hypothesis applies to $H$. We will finish each case off separately.
		
		\begin{enumerate}[label=(\alph*)]
			\item Suppose we have exactly two irrational classes. Then, $[K: \Q] \leq 2$. By \cite[Theorem A, (c)]{FariasESoares:Primes}, this implies $p = 7$, which contradicts our hypothesis on $V$.
			
			\item Now suppose we have three irrational classes. As in the last case, we have $[K : \Q] \leq 6$ by elementary Galois theory. Also, the action of $H$ on $V$ is irreducible over $\mathbb{F}_p$. If it is reducible over the algebraic closure of $\mathbb{F}_p$, then $\varphi$ is a combination of Galois conjugates of one of its constituents, all with multiplicity $1$, by \cite[Theorem 9.21]{Isaacs:CTFG}. Let $\chi$ be one such (irrational) irreducible constituent. Then, $K = \Q(\varphi) \subset \Q(\chi)$.
			
			Let $\mathcal{G} = \Gal(\Q(\chi) {:} \Q)$. Then each element $\sigma \in \mathcal{G}$ induces a new character $\sigma \chi$. If $\sigma \chi = \chi$, then the fixed field of $\sigma$ contains every value taken by $\chi$, and, therefore, contains $\Q(\chi)$, meaning $\sigma$ is the identity. This means $\sigma \chi \neq \chi$ for all non-identity $\sigma$. 
			
			Since, by \Cref{TheoremA}, there are exactly three irrational characters in $\Irr(G)$, $[\Q(\chi) : \Q] \leq 3$, meaning the same is true for $K$. Also, $[K: \Q]$ cannot be $2$ (again, by elementary Galois theory), meaning $[K : \Q] = 3$, since we assumed $\varphi$ to be irrational. Note, then, that $K$ cannot contain any root of unity besides $1, -1$, as the Euler totient function $\phi$ only takes even values.
			
			By \Cref{FariasESoares}, $K$ contains a primitive $\frac{p-1}{(p-1, 4)}$-root of unity, meaning we have the bound $\frac{p-1}{(p-1, 4)} \leq 2$. The only primes which satisfy this inequality are $p = 2, 3, 5$, all of which contradict our assumption on the order of $V$.
		\end{enumerate}
		
		\item[] \underline{Situation 2: $x \in V$}
		
		\begin{enumerate}[label=(\alph*)]
			\item In this case, where there are exactly two irrational classes, write again $\mathbb{F}_p^\times = \langle \mu \rangle$ and write $\mu_1 = \mu^2$. Writing $B_G(v) = N_G(\langle v \rangle)/C_G(v)$, since there are only two irrational conjugacy classes in $G$, we have $[\Aut(\langle v \rangle) : B_G(v)] \leq 2$ for all $v \in V$. This is because, by \cite[Theorem 3.11]{Navarro:McKay}, $|B_G(v)| = |\Gal(\Q_{o(v)}{:}\Q(v))|$ and, since $\Aut(\langle v \rangle) \cong \Gal(\Q_{o(v)}{:}\Q)$, $[\Aut(\langle v \rangle) : B_G(v)] = |\Gal(\Q(v){:}\Q)| \leq 2$. Also, $\Aut(\langle v \rangle)$ can be identified with $\mathbb{F}_p^\times$, as $v$ has order $p$. Thus, it has a unique subgroup of index $2$, which can be identified with $\langle \mu_1 \rangle$. If $v \neq x$, then $v$ is conjugate to $\mu_1 v$ by hypothesis. Otherwise, $[\Aut(\langle x \rangle) : B_G(x)] = 2$ and $B_G(x) = \langle \mu_1 \rangle$, meaning $x$ too is conjugate to $\mu_1 x$.
			
			The previous paragraph thus leads us to conclude, using that $V$ is abelian, that the action of $H$ on $V$ by conjugation contains the $\frac{p-1}{2}$-eigenvalue property. Then, by \Cref{FariasESoares}, denoting again $K = \Q(\varphi)$ for the Brauer character $\varphi$ of the $\mathbb{F}_pH$-module, $K$ contains a primitive $\frac{(p-1)/2}{((p-1)/2, 4)}$-root of unity. But, in our situation, $H$ is a rational group. By \cite[Theorem A, (c)]{FariasESoares:Primes}, $p \leq 7$ and, if $p = 7$, $K$ would contain a primitive $3$-root of unity, by the above, a contradiction. We are left with $p \in \{2, 3, 5\}$, contradicting our hypothesis on the order of $V$.
			
			\item Finally, suppose once more that there are three irrational classes. We may repeat the exact same argument as last case to finally obtain that the action of $H$ on $V$ has the $\frac{p-1}{3}$-eigenvalue property, with $H$ rational. Then, by \Cref{FariasESoares}, we can get the crude bound of $p \leq 25$. Of these primes, only $7$ and $13$ are possible, since they are the only ones congruent to $1$ modulo $3$. Also, in the case where $p = 13$, $K$ would contain a primitive $3$-root of unity, a contradiction.
		\end{enumerate}			
	\end{itemize}
	
	Thus, we have $p \in \{2, 3, 5, 7\}$, which finishes the proof.
\end{proof}

We note that it is not clear, in the case where $G$ is a solvable group with exactly $2$ rational conjugacy classes, that $7$ can actually divide the order of $G$, as we found no instance of this occurring.

We now proceed to a proof of \Cref{TheoremC}.

\begin{theorem}
	\label{boundIrrational}
	Let $G$ be a finite group. Then:
	\begin{enumerate}[label=(\alph*)]
		\item if $G$ has exactly $n$ irrational conjugacy classes, then $G$ has at most $\frac{n^2}{2}$ irrational irreducible characters;
		\item if $G$ has exactly $n$ irrational irreducible characters, then $G$ has at most $\frac{n^2}{2}$ irrational conjugacy classes;
	\end{enumerate}
\end{theorem}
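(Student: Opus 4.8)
The plan is to exploit that $\mathcal{G} = \Gal(\Q_e{:}\Q)$, with $e = \exp(G)$, is abelian and acts on both $\Irr(G)$ and $\cl(G)$ having the rational characters and rational classes as its fixed points. Consequently the irrational classes and the irrational characters each form a $\mathcal{G}$-invariant set. Since fixing every irrational class is the same as fixing every class (the rational ones being fixed by all of $\mathcal{G}$ anyway), the kernel $\mathcal{K}$ of the action on $\cl(G)$ coincides with the kernel of the action on $\Irr(G)$; hence $A := \mathcal{G}/\mathcal{K}$ acts faithfully on the set of irrational classes and faithfully on the set of irrational characters.

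For part (a), the faithful action on the $n$ irrational classes realizes $A$ as an abelian subgroup of $S_n$. By \Cref{keyCorollary} I may write $A = \langle \bar g_1\rangle \times \cdots \times \langle \bar g_k\rangle$ as an internal direct product of $k \leq n/2$ non-trivial cyclic groups of prime-power order. Lifting each $\bar g_i$ to an element $g_i \in \mathcal{G}$ and setting $\mathcal{H}_i = \langle g_i\rangle$, \Cref{BLCT} guarantees that the number of irreducible characters moved by $g_i$ equals the number of conjugacy classes moved by $g_i$; the latter is at most $n$, as every moved class is irrational.

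The key step is a covering argument: every irrational character is moved by at least one of the $g_i$. Indeed, such a character $\chi$ is moved by some element of $A$, which is a product of powers of the $\bar g_i$; were every $\bar g_i$ to fix $\chi$, then so would their product. Thus the set of irrational characters is contained in the union of the supports of $g_1, \dots, g_k$ on $\Irr(G)$, each of size at most $n$, so $G$ has at most $kn \leq (n/2)\,n = n^2/2$ irrational characters. Part (b) follows from the identical argument with the roles of $\Irr(G)$ and $\cl(G)$ interchanged, which is legitimate precisely because \Cref{BLCT} is symmetric in characters and classes.

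I expect the step requiring the most care to be the reduction to cyclic subgroups: \Cref{BLCT} equates the two actions only on cyclic subgroups, not on the whole abelian group $A$, so one cannot directly compare the supports of $A$ on $\Irr(G)$ and on $\cl(G)$. Covering the irrational characters by the supports of the generators $g_i$ of the cyclic factors, and combining the per-factor bound of $n$ with the bound $k \leq n/2$ from \Cref{keyCorollary}, is exactly what bridges this gap and yields the quadratic estimate.
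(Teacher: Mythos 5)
Your argument is correct and follows essentially the same route as the paper: realize the Galois action on the $n$ irrational classes as an abelian subgroup of $S_n$, decompose it into at most $n/2$ cyclic factors via \Cref{keyCorollary}, bound the characters moved by each cyclic factor by $n$ using \Cref{BLCT}, and cover all irrational characters by these supports to get $kn \leq n^2/2$. The only cosmetic remark is that the coincidence of the two kernels (needed so that every irrational character is moved by some $\bar g_i$) is itself a consequence of \Cref{BLCT} applied to cyclic subgroups, rather than of the parenthetical you give; the paper states the corresponding covering step with the same brevity.
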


\begin{proof}
	We will contend ourselves with the proof of case (a), as case (b) follows from similar arguments. Consider the action of $\mathcal{G} = \Gal(\Q_e {:} \Q)$ on the irrational classes of $G$, where $e = \exp(G)$. This gives us an abelian subgroup $H$ of $S_n$ as the image of the action, and thus we may view $H$ as the one acting on the classes. 
	
	Let $C_k$ be a cyclic direct factor of $H$. Then, $C_k$ can act non-trivially on at most $n$ irreducible characters, since it moves at most $n$ conjugacy classes and the two actions are permutation isomorphic by \Cref{BLCT}. Let $m$ be the number of characters moved by $C_k$ and let $C_l$ be another direct factor of $H$. Suppose $C_l$ moves more than $n - m$ irreducible characters all distinct from those moved by $C_k$. Then, taking generators $\sigma \in C_k$ and $\tau \in C_l$, $\sigma \tau$ acts non-trivially on more than $n$ irreducible characters, which contradicts \Cref{BLCT}.
	
	The above argument shows, in particular, that the direct product $C_k \times C_l$ moves at most $2n$ irreducible characters, taking a crude upper bound. Hence, by \Cref{keyCorollary}, we may apply the argument inductively to show that $H$ can act non-trivially on at most $\frac{n^2}{2}$ irreducible characters of $G$. As every irrational character must be moved by some element of $H$, this finishes the proof.
\end{proof}

\begin{proof}[Proof of \Cref{TheoremC}]
	The first part of the theorem is \Cref{boundIrrational}. For the second part, given $\chi \in \Irr(G)$, notice that $[\Q(\chi) : \Q]$ is the size of the orbit of $\chi$ by the action of the group $\Gal(\Q(\chi) {:} \Q)$, since the stabilizer is trivial (this is by an argument done in the proof of \Cref{TheoremB}, for example). Thus, it is bounded by the number of irrational irreducible characters of $G$, which, by \Cref{boundIrrational}, is bounded by a function of $n$.
\end{proof}

In the solvable case, this result also has another interesting consequence.

\begin{corollary}
	\label{corPrimes}
	If $G$ is a solvable group with exactly $n$ irrational irreducible characters, then the prime divisors of $G$ are bounded as a function of $n$.
\end{corollary}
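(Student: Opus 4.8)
The plan is to deduce the bound on $\pi(G)$ from \Cref{boundIrrational} together with a uniform version of the Farias e Soares argument already used in the proof of \Cref{TheoremB}, extracting merely a bound rather than the explicit set $\{2,3,5,7\}$. Suppose $G$ is solvable with exactly $n$ irrational irreducible characters. Two consequences are immediate: by \Cref{boundIrrational} the number $m$ of irrational conjugacy classes satisfies $m \le n^2/2$; and for every $\chi \in \Irr(G)$ the number of Galois conjugates of $\chi$ equals $[\Q(\chi):\Q]$ and they are all irrational, so $[\Q(\chi):\Q] \le n$. I would then prove, by induction on $|G|$, that every solvable group with at most $n$ irrational irreducible characters has all prime divisors below an explicit bound $h(n)$, which I take as the definition of the claimed bound. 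If $G$ has two distinct minimal normal subgroups $M_1 \ne M_2$, then $G$ embeds into $G/M_1 \times G/M_2$, each factor inflates its irrational characters from $G$ and hence has at most $n$ of them, so induction bounds $\pi(G/M_i)$ and therefore $\pi(G)$. Thus assume $G$ has a unique minimal normal subgroup $V$, an elementary abelian $p$-group. By induction $\pi(G/V)$ lies below $h(n)$; if $p \le h(n)$ we are done, so suppose $p > h(n)$. Then $p \nmid |G/V|$, so $V \in \Syl_p(G)$ is normal, a $p$-complement $H \cong G/V$ exists, and $H$ acts $\mathbb{F}_p$-irreducibly on $V$, since an $H$-submodule would be normal in $G = VH$.

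The heart of the argument is to produce a large eigenvalue property uniformly over $V$. For $1 \ne v \in V$ write $B_G(v) = N_G(\langle v\rangle)/C_G(v) \le \Aut(\langle v\rangle) \cong \mathbb{F}_p^\times$, as in the proof of \Cref{TheoremB}; the index $[\mathbb{F}_p^\times : B_G(v)]$ equals $[\Q(v):\Q]$, the size of the Galois orbit of $v^G$. As $V$ is Galois-stable, that orbit consists of distinct irrational classes lying in $V$, so this index is at most $m$, and $B_G(v)$ differs from $\mathbb{F}_p^\times$ only for $v$ in the at most $m$ irrational classes contained in $V$. Because $\mathbb{F}_p^\times$ is cyclic, the intersection $B := \bigcap_{1 \ne v \in V} B_G(v)$ has index equal to the $\operatorname{lcm}$ of these (at most $m$) indices, whence $[\mathbb{F}_p^\times : B] \le \operatorname{lcm}(1,\dots,m)$ and $k := |B| \ge (p-1)/\operatorname{lcm}(1,\dots,m)$. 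Taking $\lambda$ a generator of $B$, which has order $k$ dividing $p-1$, and using $G = VH$ with $V$ abelian, for every $v$ one finds $h \in H$ with $h \cdot v = \lambda v$; that is, the action of $H$ on $V$ has the $k$-eigenvalue property.

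Finally I would feed this into \Cref{FariasESoares}, applicable since $p \nmid |H|$. It yields either $[\Q(\varphi):\Q] \ge p/\sqrt{3}$ or that $\Q(\varphi)$ contains a primitive $(k/(k,4))$-th root of unity, where $\varphi$ is the Brauer character afforded by $V$. Now $\varphi$ is either rational or satisfies $\Q(\varphi) \subseteq \Q(\chi)$ for an irrational irreducible constituent $\chi$ of $H$, hence of $G$; since $[\Q(\chi):\Q] \le n$, in all cases $[\Q(\varphi):\Q] \le n$. In the first alternative $p \le \sqrt{3}\,n$. In the second, $\phi(k/(k,4)) \le [\Q(\varphi):\Q] \le n$, which bounds $k/(k,4)$ because $\phi$ is unbounded, hence bounds $k \le 4\,(k/(k,4))$, and then $p-1 \le \operatorname{lcm}(1,\dots,m)\cdot k$ is bounded in terms of $n$ (recall $m \le n^2/2$). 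Either way $p$ is bounded by an explicit $h(n)$, contradicting $p > h(n)$ and closing the induction.

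The main obstacle is the second paragraph: replacing the ad hoc case split of the proof of \Cref{TheoremB} (according to whether the irrational elements lie in $V$) by a single intersection argument that controls $B$, and hence $k$, uniformly over all of $V$, so that $k$ stays comparable to $p$. The one point requiring care there is the bookkeeping that converts the count of irrational classes inside $V$ into the index bound $[\mathbb{F}_p^\times : B] \le \operatorname{lcm}(1,\dots,m)$; once the $k$-eigenvalue property is secured, \Cref{FariasESoares} and the elementary growth of $\phi$ do the rest.
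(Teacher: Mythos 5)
Your argument is correct, but it takes a genuinely different and far more self-contained route than the paper. The paper disposes of this corollary in one line: by \Cref{TheoremC} (in its character version) every $\chi \in \Irr(G)$ satisfies $[\Q(\chi):\Q] \le f(n)$, and then \cite[Theorem C]{Tent:Quadratic} --- which bounds the primes dividing a solvable group in terms of a uniform bound on the degrees $[\Q(\chi):\Q]$ --- finishes immediately. You instead reprove the needed special case of Tent's result from scratch by rerunning the Farias e Soares machinery of the proof of \Cref{TheoremB}, and the one genuinely new ingredient you supply is a nice one: rather than splitting into cases according to whether the irrational elements lie in $V$, you intersect the groups $B_G(v)$ over all $1 \ne v \in V$ and use cyclicity of $\mathbb{F}_p^\times$ to control the index of the intersection by $\operatorname{lcm}(1,\dots,m)$ with $m \le n^2/2$, which yields a uniform $k$-eigenvalue property with $k$ comparable to $p$. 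All the supporting steps check out (the index $[\mathbb{F}_p^\times : B_G(v)] = [\Q(v):\Q]$ is the Galois orbit size of $v^G$ and hence at most $m$; the degree bounds $[\Q(\varphi):\Q] \le \max(n,1)$ via the absolutely irreducible constituent; the two alternatives of \Cref{FariasESoares} each bound $p$ in terms of $n$ alone, closing the induction). What your approach buys is independence from Tent's theorem and an explicit, if crude, bound $h(n)$; what it costs is length, and a minor bit of care at the degenerate value $n=0$ (where the degree bound should read $[\Q(\chi):\Q]\le 1$, not $\le 0$), which your argument still handles since the second Farias e Soares alternative then forces $p \le 7$.
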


\begin{proof}
	Follows from \Cref{TheoremC} and \cite[Theorem C]{Tent:Quadratic}.
\end{proof}

We note once more that, since \Cref{boundIrrational} is valid both for characters and for classes, the preceding two results also apply if one replaces ``irrational irreducible characters'' for ``irrational conjugacy classes'' and vice-versa.

Having seen how solvable groups with few irrational conjugacy classes resemble rational solvable groups in their prime spectra, one natural question that arises is the following:

\begin{question}
	Let $G$ be a finite group with exactly $n$ irrational conjugacy classes. What are the possible composition factors of $G$?
\end{question}

For nonabelian composition factors, \Cref{TheoremC}, combined with \cite[Theorem A]{FeitSeitz:Rational}, shows the list is relatively simple - it remains to determine which elements in that list may actually occur given a specific number of irrational conjugacy classes. In particular, one very natural question is if the list is finite if $n > 0$, which would entail bounding the possible degrees of alternating composition factors of $G$.

For abelian composition factors, there does not seem to be much known. In \cite{Thompson:Rational}, it is shown that this list is finite for rational groups (the case $n = 0$), but further results have yet to appear, as far as we are aware. In \cite{Moreto:cutGroups}, the author conjectures that the list is always finite, being bounded by a function of the degree of the extensions $\Q(x^G)$ over $\Q$ (and, in particular, by a function of the number of irrational conjugacy classes, by \Cref{TheoremC}).

\section*{Acknowledgments}

The author thanks the Spanish Ministry of Science and Innovation for the financial support, and he would also like to express his gratitude to his PhD advisors Alexander Moretó and Noelia Rizo, as well as to his friend and colleague Juan Martínez Madrid for many helpful conversations on the subject.


\end{document}